\documentclass[a4paper,12pt]{amsart}
\usepackage{cases}
\usepackage{amsmath}
\usepackage{amssymb}
\usepackage[all]{xy}
\usepackage{mathrsfs}
\usepackage{amsfonts}
\usepackage{amscd}
\usepackage{multirow,array}

\usepackage{amssymb}
\usepackage{graphicx, picinpar,epsf}
\usepackage[colorlinks, linkcolor=red,anchorcolor=blue,citecolor=green]{hyperref}

\addtolength{\topmargin}{-1.5cm}
\addtolength{\textheight}{1.5cm}
\addtolength{\textwidth}{3cm}
\addtolength{\oddsidemargin}{-1.5cm}
\addtolength{\evensidemargin}{-1.5cm}

\newcommand{\confer}{{\em cf.}\ }
\renewcommand{\mod}{\operatorname{mod}\nolimits}

\newcommand{\End}{\operatorname{End}\nolimits}

\newcommand{\Ext}{\operatorname{Ext}\nolimits}
\newcommand{\Hom}{\operatorname{Hom}\nolimits}

\newcommand{\Coker}{\operatorname{Coker}\nolimits}
\renewcommand{\dim}{\operatorname{dim}\nolimits}

\newcommand{\add}{\operatorname{add}\nolimits}

\newcommand{\ct}{{\mathcal T}}
\newcommand{\co}{{\mathcal O}}
\newcommand{\crc}{{\mathscr C}}
\newcommand{\crd}{{\mathscr D}}
\newcommand{\crm}{{\mathscr M}}
\newcommand{\coh}{\operatorname{coh}\nolimits}
\newcommand{\vect}{\operatorname{vect}\nolimits}

\newcommand{\can}{\operatorname{can}\nolimits}
\newcommand{\rk}{\operatorname{rk}\nolimits}
\newcommand{\st}{[1]}
\newcommand{\X}{\mathbb{X}}

\newcommand{\vx}{\vec{x}}
\newcommand{\vc}{\vec{c}}
\newcommand{\eps}{\varepsilon}

\newtheorem{theorem}{Theorem}[section]

\newtheorem{corollary}[theorem]{Corollary}

\newtheorem{example}[theorem]{Example}

\newtheorem{lemma}[theorem]{Lemma}

\newtheorem{proposition}[theorem]{Proposition}
\newtheorem{remark}[theorem]{Remark}

\newtheorem*{Riemann-Roch Formula}{Riemann-Roch Formula}

\newtheorem*{List A}{List A}
\numberwithin{equation}{section}
\newtheorem{List}[theorem]{List}
%%%%%%%%%%%%%%%%%%%%%%%%%%%%%%%%%%%%%%%%%%%%%%%%%%%%%%%%%%%%%%%%%%%%%%%%%%%%%%%%%%%%%%%%%%%%%%%%%%%%%%%
%%%%%%%%%%%%%%%%%%%%%%%%%%%%%%%%%%%%%%%%%%%%%%%%%%%%%%%%%%%%%%%%%%%%%%%%%%%%%%%%%%%%%%%%%%%%%%%%%%%%%%%

\begin{document}
\input xy
\xyoption{all}

%\title[Tilting objects on weighted projective lines]{Tilting objects in the stable category of vector bundles on a weighted projective line of weight type $(2,2,2,2)$ II: a cluster tilting approach}
\title[Tilting objects on weighted projective lines]{Tilting objects on tubular weighted projective lines: a cluster tilting approach}

\author[Chen]{Jianmin Chen}
\address{School of Mathematical Sciences, Xiamen University, Xiamen 361005, P.R.China}
\email{chenjianmin@xmu.edu.cn}

\author[Lin]{Yanan Lin}
\address{School of Mathematical Sciences, Xiamen University, Xiamen
361005, P.R.China} \email{ynlin@xmu.edu.cn}

\author[Liu]{Pin Liu}
\address{Department of Mathematics, Southwest Jiaotong University, Chengdu
610031, P.R.China} \email{pinliu@swjtu.edu.cn}

\author[Ruan]{Shiquan Ruan$^\dag$}
\address{School of Mathematical Sciences, Xiamen University, Xiamen 361005, P.R.China}
\thanks{$^\dag$ Corresponding author}
\email{sqruan@xmu.edu.cn}

\subjclass[2010]{14A22, 14F05, 16G70, 16S99, 18E30} \keywords{Tilting
object; cluster tilting; weighted projective line; vector
bundle; cluster mutation}

\begin{abstract}
Using cluster tilting theory, we investigate tilting objects in the stable category of vector
bundles on a weighted projective line of weight type $(2, 2, 2, 2)$. More precisely, a tilting object consisting of rank-two bundles is constructed via cluster tilting mutation. Moreover, the cluster tilting approach also provides a new method to classify the endomorphism algebras of tilting objects in the category of coherent sheaves and the associated bounded derived category.
\end{abstract}

\maketitle

\tableofcontents

%%%%%%%%%%%%%%%%%%%%%%%%%%%%%%%%%%%%%%%%%%%%%%%%%%%%%%%
%%% The main text. 正文部分%
%%  图表引用\cref公式引用\eqref参考文献\cite
%%%%%%%%%%%%%%%%%%%%%%%%%%%%%%%%%%%%%%%%%%%%%%%%%%%%%%%

%%%%%%%%%%%%%%%%%%%%%%%%%%%%%%%%%%%%%%%%%%%%%%%%%%%%%%%%%%%%%%%%%%%%%%%%%%%%%%%%%%%%%%%%%%%%%%%%%%%%%%%
\section{Introduction and main results}
%%%%%%%%%%%%%%%%%%%%%%%%%%%%%%%%%%%%%%%%%%%%%%%%%%%%%%%%%%%%%%%%%%%%%%%%%%%%%%%%%%%%%%%%%%%%%%%%%%%%%%%

Weighted projective lines have been widely studied in representation theory since their introduction by Geigle and Lenzing \cite{[GL]} in 1987. One of the important properties of weighted projective lines is that they have `canonical' tilting bundles, whose endomorphism algebras are the so-called canonical algebras in the sense of Ringel \cite{[Ri]}. A higher dimensional analog of weighted projective line has been introduced by Herschend-Iyama-Minamoto-Oppermann \cite{[HIMO]}, who introduced the so-called Geigle-Lenzing complete intersections. For a fixed positive integer $d$, the category of coherent sheaves on a Geigle-Lenzing projective space, a Noetherian abelian category with global dimension $d$, was proved to be derived equivalent to the category of finite dimensional modules over a finite dimensional algebra called $d$-canonical algebra by showing that any Geigle-Lenzing projective space has a tilting bundle. In doing so, two important subcategories were introduced: one is the category of vector bundles and the other is its full subcategory of direct sums of line bundles. The $d$-tilting bundles on a Geigle-Lenzing projective space are certain objects in $d$-cluster tilting subcategories of vector bundles. In the present paper, we try to use the rich structure of cluster categories (\confer \cite{[BMRRT], [BKL]}) or more general 2-Calabi-Yau triangulated categories (\confer \cite{[KR]})  to provide an explicit description of tilting bundles. For this, we concentrate on the case that the Geigle-Lenzing complete intersection is of Calabi-Yau type and $d=1$ (i.e. a weighted projective line of tubular type).

Now we describe the main results of the paper in more detail. The terminologies mentioned here will be recalled later. Let $\X$ be a weighted projective line of tubular type, i.e. $\X$ has weight type $(2,3,6)$, $(2,4,4)$, $(3,3,3)$ or $(2, 2, 2, 2)$. Let $\coh\X$ be the category of coherent sheaves on $\X$.  The Grothendieck group $K_0(\coh\X)$ was computed by Geigle and Lenzing \cite{[GL]}. There are two important $\mathbb{Z}$-linear forms, \emph{rank} $\rk$ and \emph{degree} $\deg$, on $K_0(\coh\X)$. Kussin-Lenzing-Meltzer \cite{[KLM2]} proved that the full subcategory $\vect\X$ of $\coh\X$ formed by all vector bundles carries a distinguished exact structure such that $\vect\X$ becomes a Frobenius category, with the system $\mathcal{L}$ of all line bundles (rank-one bundles) as the indecomposable projective-injective objects. Hence the attached stable category $\underline{\vect}\X=\vect\X/[\mathcal{L}]$ is a triangulated category. Moreover, there is an triangle equivalence
$
\underline{\mbox{vect}}\mathbb{X}\simeq D^b(\coh\X).
$
By definition of the stable category, indecomposable vector bundles of rank two are destined to play an important role in the structural theory of $\underline{\vect}\X$. It was proved in \cite{[KLM2]} that when $\X$ has the weight triple $(p_1, p_{2}, p_3)$, there is a tilting object of $\underline{\vect}\X$ consisting of rank-two bundles. As a complementarity, we prove the following.
\begin{theorem}[See Theorem \ref{theorem2}]\label{I:thm1}
Let $\X$ be a weighted projective line of weight type $(2,2,2,2)$. Then there is a basic tilting object in $\underline{\vect}\X$ satisfying that each indecomposable direct summand is a rank-two bundle.
\end{theorem}

For any non-zero object $X\in\coh\X$, the \emph{slope} is defined as $\mu X:=\deg X/\rk X$. It was proved in \cite{[KLM2]} that the suspension functor $\st$ of the triangulated category $\underline{\mbox{vect}}\mathbb{X}$
induces a bijection $\alpha: \mathbb{Q}\to\mathbb{Q}$ on slopes, which is monotonically increasing, and satisfies $q<\alpha(q)$ for each $q\in\mathbb{Q}$. The interval category $\crd_q$ is the full subcategory of $\underline{\vect}\X$ defined as the additive closure of all indecomposables $X$ satisfying $q<\mu X\leq \alpha(q)$. It was shown in \cite{[KLM2]} that the interval category $\crd_q$ is abelian and equivalent to $\coh\X$. Being different from other tubular types, the weighted projective line of weight type $(2,2,2,2)$ has the following special feature, which is basically due to Meltzer \cite[Cor. 10.1.1]{[M]}.
\begin{proposition}[See Proposition \ref{proposition4}]\label{slope feature}
Let $\X$ be a weighted projective line of weight type $(2,2,2,2)$ and $T$ a tilting object in $\underline{\vect}\X$. Then there exists some $q\in\mathbb{Q}$ such that the slope of each indecomposable direct summand of $T$ belongs to the interval $[q, \alpha(q)]$.
\end{proposition}

As mentioned already, our proof of Theorem \ref{I:thm1} is based on cluster tilting theory in a 2-Calabi-Yau triangulated category.
The equivalence $\underline{\mbox{vect}}\mathbb{X}\simeq D^b(\coh\X)$ allows us to consider certain quotient category of $\underline{\mbox{vect}}\mathbb{X}$, the \emph{cluster category} associated to $\X$ (\confer \cite{[BKL]}). Buan-Marsh-Reineke-Reiten-Todorov \cite{[BMRRT]} showed that the cluster tilting theory of the cluster category is more regular than the classical tilting theory. The recursive process of mutation of cluster tilting objects is
closely related to the notion of mutation of quivers introduced by Fomin and Zelevinsky in the fundamental paper \cite{[FZ]}. Our key observation is that the mutation class of the quiver of the canonical algebra of type $(2,2,2,2)$ is finite and it consists of only 4 quivers up to isomorphism. This encourages us to give a classification of all the endomorphism algebras of
tilting objects in $D^{b}(\coh\X)$ in terms of quivers with relations, which goes back to Meltzer \cite{[M]}, but with a different approach.
\begin{theorem}\label{I:thm2} Let $\X$ be a weighted projective line with weight type $(2,2,2,2)$ and $\Lambda$ a finite dimensional $k$-algebra. Then $\Lambda$ is an endomorphism algebra of a tilting object in $D^{b}(\coh\X)$ if and only if $\Lambda$ is isomorphic to one of the algebras in List \ref{list}.
\end{theorem}

We also describe the endomorphism algebras of tilting sheaves in $\coh\X$.
\begin{theorem}\label{I:thm3}Let $\X$ be a weighted projective line with weight type $(2,2,2,2)$ and  $\Lambda'$ a finite dimensional $k$-algebra. Then $\Lambda'$ is an endomorphism algebra of a tilting sheaf in $\coh\X$ if and only if it is isomorphic to some $B_{ij}$ in List \ref{list}.
\end{theorem}

The paper is organized as follows: In Section \ref{Sec Pre}, we recall some
basic results on the category of coherent sheaves on a weighted
projective line of weight type $(2,2,2,2)$. In Section \ref{Sec Slope}, we study the slopes of indecomposable direct summands of tilting objects in the stable category of vector bundles and prove Proposition \ref{slope feature}. Section \ref{Sec Rank} is devoted to proving Theorem \ref{I:thm1}, i.e. we construct a tilting object in the stable category of vector bundles whose indecomposable direct summands are all of rank two via cluster tilting mutation. The classification theorems \ref{I:thm2} and \ref{I:thm3} are proved after we describe all the tilting objects corresponding to a given cluster tilting object in Section \ref{Sec Classification}.

\noindent {\bf Convention.}
Let $k$ be an algebraically closed field. A triangulated category $\mathcal{T}$ is always a $k$-linear $\Hom$-finite triangulated category with suspension functor $\st$. For two objects $X$ and $Y$, the morphism space from $X$ to $Y$ in $\mathcal{T}$ is denoted by $\mathcal{T}(X, Y)$. We write $\Ext^1(X, Y)$ for $\mathcal{T}(X, Y\st)$. For an object $X$, denote by $|X|$ the number of non-isomorphic indecomposable direct summands of $X$. Denote by $\add X$ the subcategory of $\ct$ consisting of objects which are direct summands of finite direct sums of $X$.
Throughout the paper, we view isomorphism as equality for notational simplicity.

%%%%%%%%%%%%%%%%%%%%%%%%%%%%%%%%%%%%%%%%%%%%%%%%%%%%%%%%%%%%%%%%%%%%%%%%%%%%%%%%%%%%%%%%%%%%%%%%%%%%%%%
\section{Preliminaries}\label{Sec Pre}
%%%%%%%%%%%%%%%%%%%%%%%%%%%%%%%%%%%%%%%%%%%%%%%%%%%%%%%%%%%%%%%%%%%%%%%%%%%%%%%%%%%%%%%%%%%%%%%%%%%%%%%
\subsection{Quiver mutation and cluster tilting mutation}
Let us recall that a quiver is an oriented graph. A {\it loop} in a quiver $Q$ is an arrow whose source coincides with its target. Let $Q$ be a finite quiver without loops or oriented cycles of length 2 (2-cycles for short). Let $i$ be a vertex of $Q$. The mutation of the quiver $Q$ at the vertex $i$ is a quiver denoted by $\crm_i(Q)$ and obtained from $Q$ as follows:
\begin{itemize}
\item[(M1)] for any couple of arrows $j \to i \to k$, add an arrow $j \to k$;
\item[(M2)] reverse the arrows incident with $i$;
\item[(M3)] remove a maximal collection of 2-cycles.
\end{itemize}

\begin{example}\label{example}
Let $Q$ be the following quiver:
\[
\begin{array}{cccc}
 \xymatrix@-1pc
 {&& 2\ar[rrdd] \\
  &&3\ar[rrd]\\
    1\ar[rruu] \ar[rru]\ar[rrd] \ar[rrdd]  & &&&6.\ar@2{->}[llll]\\
  && 4\ar[rru] \\
  && 5\ar[rruu]
  }
  \end{array}
\]It is easy to compute that the mutation class (the set of all quivers obtained from $Q$ by iterated mutations)
consists of the following 4 quivers up to isomorphism.

\begin{equation}\label{quiver mutation classes}\begin{array}{cccc}
 \xymatrix@-1.5pc
 {&&2 \ar[rrdd] \\
  &&3 \ar[rrd]\\
  1  \ar[rruu] \ar[rru]\ar[rrd] \ar[rrdd]  & &&&6\ar@2{->}[llll]\\
  &&4\ar[rru] \\
  &&5 \ar[rruu]
  }
& \xymatrix@-1.5pc
{&&2\ar[lldd] \\
  &&3 \ar[rrd] \\
  1\ar[rru] \ar[rrd] \ar[rrdd]  &&&&6 \ar[lluu] \ar[llll]\\
  &&4 \ar[rru] \\
  &&5 \ar[rruu]
  }
& \xymatrix@-1.5pc
 {&&2\ar[lldd]\\
  &&3 \ar[lld] \\
  1 \ar[rrd] \ar[rrdd]   & &&&6\ar[lluu] \ar[llu]\\
  &&4 \ar[rru]\\
  &&5 \ar[rruu]
  }
  & \xymatrix@-1.5pc
{\\
\\
1\ar[rr]^{}\ar[rrdd]^>>>>>{}&&4\ar[rr]^{}\ar[rrdd]^>>>>>{}&&2\ar@/_1pc/[llll]\ar@/_/[ddllll] \\
  \\
6\ar[rr]^{}\ar[rruu]_>>>>>{}&&5\ar[rr]^{}\ar[rruu]_>>>>>{}&&3\ar@/^1pc/[llll]\ar@/_/[uullll]
}\\[8em]
Q_{1}=Q & Q_{2}=\crm_2(Q_1) &Q_{3}=\crm_3(Q_2) & Q_{4}=\crm_6(Q_3)
\end{array}
\end{equation}
\end{example}

Recall that for a triangulated category $\mathcal{T}$, a Serre functor is an autoequivalence $\Sigma: \mathcal{T}\to \mathcal{T}$ such that there is a bifunctorial isomorphism
\[
\ct(X, Y)=D\ct(Y, \Sigma X), \ \forall X, Y\in\mathcal{T},
\]where $D=\Hom_k(-, k)$ is the usual duality over the ground field $k$. A triangulated category $\mathcal{T}$ is \emph{$2$-Calabi-Yau} if it admits a Serre functor isomorphic as a triangle functor to the $2$nd power of its suspension functor. It follows that we have a bifunctorial isomorphism
\[
\Ext^1(X, Y)=D\Ext^1(Y, X), \ \forall X, Y\in\mathcal{T}.
\]An object $T$ in a 2-Calabi-Yau triangulated category $\ct$ is \emph{cluster tilting} if
\begin{itemize}
\item[(a)] $T$ is rigid, i.e. $\Ext^1(T, T)=0$ and
\item[(b)] for each object $X$ of $\ct$, $\Ext^1(T, X)=0$ implies that $X\in\add T$.
\end{itemize}

It is well-known that from a cluster tilting object in a 2-Calabi-Yau triangulated category $\ct$, it is possible to construct others by a recursive process resumed in the following.
\begin{theorem}[\cite{[BMRRT],[IY]}]\label{cluster mutation}Let $\ct$ be a 2-Calabi-Yau triangulated category with a cluster tilting object $T$. Let $T_i$ be indecomposable and
$T=\bar{T}\oplus T_i$. Then there exists a unique indecomposable $T_i^*$
non-isomorphic to $T_i$ such that $\crm_{T_i}(T)$=$\bar{T}\oplus T_i^*$ is cluster
tilting. Moreover $T_i$ and $T^*_i$ are linked by the existence of
exchange triangles
\[T_i\xrightarrow{u}B\xrightarrow{v}T^*_i\xrightarrow{w}T_i\st\quad\text{and}\quad
T^*_i\xrightarrow{u'}B'\xrightarrow{v'}T_i\xrightarrow{w'}T^*_i\st,
\]
where $u$ and $u'$ are minimal left $\add \bar{T}$-approximations and $v$ and $v'$ are minimal
right $\add \bar{T}$-approximations.
\end{theorem}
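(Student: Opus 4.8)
The plan is to establish the three assertions---existence of a complement $T_i^*$, its uniqueness, and the shape of the exchange triangles---using two standing facts. First, the $2$-Calabi--Yau property supplies a natural isomorphism $\cc(X,Y\st)\cong D\cc(Y,X\st)$ for all $X,Y$. Second, a cluster tilting object $T$ is characterised by $\add T=\{X\in\cc:\cc(T,X\st)=0\}$, which by the first fact equals $\{X\in\cc:\cc(X,T\st)=0\}$ as well. I would produce the candidate by hand: as $\cc$ is $\Hom$-finite and Krull--Schmidt, choose a minimal left $\add T_0$-approximation $u\colon T_i\to B$ and complete it to a triangle $T_i\xrightarrow{u}B\xrightarrow{v}T_i^*\xrightarrow{w}T_i\st$, declaring its cone to be $T_i^*$.

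For existence I would first check that $T_0\oplus T_i^*$ is rigid. Applying $\cc(-,T_0\st)$ to the triangle kills $\cc(T_i^*,T_0\st)$: the term $\cc(B,T_0\st)$ vanishes by rigidity of $T$, and surjectivity of $\cc(u,T_0)$ (the approximation property) forces the relevant connecting map to be zero; by $2$-CY this yields the companion $\cc(T_0,T_i^*\st)=0$, and hence $\cc(B,T_i^*\st)=0$. The self-extension then dies by a short chase: applying $\cc(-,T_i^*\st)$ exhibits $\cc(T_i^*,T_i^*\st)$ as a quotient of $\cc(T_i,T_i^*)$ via precomposition with $w$, while $\cc(T_i^*,-)$ embeds it into $\cc(T_i^*,T_i[2])\cong D\cc(T_i,T_i^*)$ via postcomposition with $w\st$; since every $f\colon T_i\to T_i^*$ factors through $v$ (because $\cc(T_i,T_i\st)=0$) and $wv=0$, the composite is zero, forcing $\cc(T_i^*,T_i^*\st)=0$. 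To upgrade rigidity to cluster tilting I would invoke the invariance of the number of indecomposable summands of rigid objects in a $2$-CY category admitting a cluster tilting object (Dehy--Keller): the bound, combined with minimality of $u$ (which leaves $T_i^*$ without summands in $\add T_0$), forces $T_i^*$ to be, up to multiplicity, a single indecomposable, so that $T_0\oplus T_i^*$ attains the maximal summand count and is therefore cluster tilting.

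The genuinely hard part is uniqueness: that $T_i$ and $T_i^*$ exhaust the indecomposable complements of $T_0$. Here I would pass to the Calabi--Yau reduction of Iyama--Yoshino associated with the functorially finite rigid subcategory $\add T_0$. The subcategory $\mathcal Z=\{X\in\cc:\cc(T_0,X\st)=0\}$ (equal to $\{X:\cc(X,T_0\st)=0\}$ by $2$-CY), after passing to the quotient $\underline{\mathcal Z}=\mathcal Z/[\add T_0]$, is again a $\Hom$-finite $2$-CY triangulated category, and the indecomposable complements of $T_0$ in $\cc$ are precisely the indecomposable cluster tilting objects of $\underline{\mathcal Z}$. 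Since $T_0\oplus T_i$ is cluster tilting, the image of $T_i$ is a cluster tilting object of $\underline{\mathcal Z}$ with a single indecomposable summand; the analysis of this rank-one reduced situation shows $\underline{\mathcal Z}$ has exactly two indecomposable cluster tilting objects, which pull back to $T_i$ and $T_i^*$. This reduction is where the $2$-CY hypothesis is used in an essential, non-formal way, and I expect it to be the main obstacle.

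Finally, for the two exchange triangles, the second is obtained symmetrically, from a minimal left $\add T_0$-approximation $u'\colon T_i^*\to B'$; its cone is $T_i$ by the uniqueness just proved. That $v$ and $v'$ are minimal \emph{right} $\add T_0$-approximations follows formally: applying $\cc(T_0,-)$ to the first triangle and using $\cc(T_0,T_i\st)=0$ shows $\cc(T_0,v)$ is surjective, so $v$ is a right $\add T_0$-approximation of $T_i^*$, and symmetrically for $v'$ using $\cc(T_0,T_i^*\st)=0$; minimality descends from that of $u,u'$. The two triangles are interchanged by $2$-CY duality, which reconfirms the minimality and completes the argument.
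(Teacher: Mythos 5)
This statement is one the paper does not prove at all: Theorem 4.4 is imported verbatim from \cite{[BMRRT]} and \cite{[IY]} as background, so there is no internal proof to compare against, and the natural benchmark is the proof in \cite{[IY]}, which your outline essentially reconstructs (approximation triangle, direct rigidity computation, Calabi--Yau reduction). The parts you carry out in detail are correct: the vanishing of $\cc(T_i^*,T_0\st)$ via the approximation property, its transfer across the $2$-CY duality, the two-step chase giving $\cc(T_i^*,T_i^*\st)=0$, and the observation that $v$ is a right $\add T_0$-approximation are all accurately executed. (One small caveat: minimality of $v$ does not ``descend formally'' from that of $u$; it uses that $T_i$ is indecomposable and not in $\add T_0$, so that the triangle has no direct summand of the form $T_i\xrightarrow{1}T_i\to 0$.)

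There are, however, two genuine gaps. The decisive one is uniqueness. After passing to the Iyama--Yoshino reduction $\underline{\mathcal{Z}}=\mathcal{Z}/[\add T_0]$ you assert that ``the analysis of this rank-one reduced situation shows $\underline{\mathcal{Z}}$ has exactly two indecomposable cluster tilting objects.'' But that assertion \emph{is} the theorem in the base case $T_0=0$: there the exchange triangle degenerates to $T_i\to 0\to T_i^*\to T_i\st$, i.e.\ $T_i^*=T_i\st$, and the claim to be proved is precisely that an indecomposable cluster tilting object $M$ in a $2$-CY category has no complement other than $M\st$. So your argument reduces the statement to itself in the rank-one case plus the (citable, logically prior) reduction theorem, and the actual content of that case --- the presentation and rigidity analysis done in \cite{[IY]}, Theorem 5.3 --- is nowhere supplied; you flag it as ``the main obstacle,'' and indeed it is the heart of the proof. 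Second, the step upgrading rigidity of $T_0\oplus T_i^*$ to cluster tilting is misattributed: Dehy--Keller give the bound on the number of pairwise non-isomorphic indecomposable summands of a rigid object, but the implication ``rigid with the maximal summand count, hence maximal rigid, hence cluster tilting'' is a later theorem of Zhou--Zhu, whose proof itself runs through the reduction/mutation machinery being established here, so invoking it risks circularity; it also does not by itself exclude $T_i^*\cong Z^m$ with $m\geq 2$. This detour is unnecessary in any case: the reduction bijection you already invoke sends $\add(T_0\oplus T_i^*)$ to the additive closure of the shift of the image of $T_i$, which is automatically cluster tilting in $\underline{\mathcal{Z}}$, and both cluster-tiltingness and indecomposability of $T_i^*$ pull back along that bijection. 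In short, the formal computations are right, but the core of the theorem is delegated to an unproven claim.
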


The recursive process of mutation of cluster tilting objects is
closely related to the notion of mutation of quivers in the
following sense.
\begin{theorem}[\cite{[BIRS]}]\label{theorem BIRS}
Let $\ct$ be a 2-Calabi-Yau triangulated category with a
cluster tilting object $T$. Let $T_i$ be an indecomposable direct
summand of $T$, and denote by $T'$ the cluster tilting object
$\crm_{T_i}(T)$. Denote by $Q_T$ (resp. $Q_{T'}$) the quiver of the
endomorphism algebra $\End_\ct(T)$ (resp. $\End_\ct(T')$). Assume
that there are no loops and no 2-cycles at the vertex $i$ of $Q_T$
(resp. $Q_{T'}$) corresponding to the indecomposable $T_i$ (resp.
$T^*_i$). Then we have $Q_{T'}=\crm_i(Q_T).$
\end{theorem}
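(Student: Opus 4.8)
The plan is to read off the quiver $Q_S$ of $\End_\cc(S)$ for a cluster tilting object $S=\bigoplus S_l$ from its arrows $a\to b$, which form a basis of the space of irreducible maps $\mathrm{Irr}(S_a,S_b):=\rad(S_a,S_b)/\rad^2(S_a,S_b)$ computed in $\End_\cc(S)$, and then to track how these spaces change when $T_i$ is replaced by $T_i^*$. The central input is Theorem \ref{cluster mutation}: the two exchange triangles $T_i\xrightarrow{u}B\xrightarrow{v}T_i^*\to T_i\st$ and $T_i^*\xrightarrow{u'}B'\xrightarrow{v'}T_i\to T_i^*\st$ exhibit $u,u'$ as minimal left and $v,v'$ as minimal right $\add T_0$-approximations. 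Since $u$ is a minimal left $\add T_0$-approximation, the multiplicity of $T_j$ in the middle term satisfies $[B:T_j]=\#\{\text{arrows }i\to j\text{ in }Q_T\}$, the absence of a loop at $i$ guaranteeing that no correction from $\rad(T_i,T_i)$ enters; dually $[B':T_j]=\#\{\text{arrows }j\to i\text{ in }Q_T\}$.

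First I would dispose of the arrows incident to the mutated vertex, i.e. rule (M2). Passing from $T$ to $T'$ merely interchanges the roles of the two triangles: for the summand $T_i^*$ of $T'$ it is $u'$ that is the defining minimal left $\add T_0$-approximation, so $\#\{\text{arrows }i\to j\text{ in }Q_{T'}\}=[B':T_j]=\#\{\text{arrows }j\to i\text{ in }Q_T\}$, and symmetrically $\#\{\text{arrows }j\to i\text{ in }Q_{T'}\}=[B:T_j]=\#\{\text{arrows }i\to j\text{ in }Q_T\}$. Thus every arrow touching $i$ is reversed, exactly as (M2) demands. The hypothesis that $Q_T$ and $Q_{T'}$ have no loop at $i$ is precisely what makes these two multiplicity counts equal to the respective arrow numbers.

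The heart of the argument is the count of arrows $j\to k$ with $j,k\neq i$, which must realize rules (M1) and (M3). Here the objects $T_j,T_k$ are common summands of $T$ and $T'$, so $\Hom_\cc(T_j,T_k)$ is one fixed space; what changes is its radical filtration, since in $\End_\cc(T)$ the square $\rad^2$ contains the compositions routed through $T_i$, whereas in $\End_\cc(T')$ it contains those routed through $T_i^*$. Applying $\Hom_\cc(T_j,-)$ to the first exchange triangle and using the rigidity $\Hom_\cc(T_j,T_i\st)=0$, every map $T_j\to T_i^*$ lifts along $v$ to a map $T_j\to B\in\add T_0$; dually, via the second triangle, every map $T_i^*\to T_k$ extends along $u'$ from a map $B'\to T_k$. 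Feeding these factorizations into the comparison of $\rad/\rad^2$ should show that each length-two path $j\to i\to k$ of $Q_T$ contributes precisely one new direct arrow $j\to k$ in $Q_{T'}$, which is rule (M1).

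The step I expect to be the main obstacle is controlling the cancellation (M3) and proving that no further spurious arrows are created or destroyed: I must show that the arrows $j\to k$ newly produced by paths through $i$ interact with the pre-existing arrows $k\to j$ exactly by forming and deleting $2$-cycles, with nothing left over. I would extract this from the minimality of the approximations together with the $2$-Calabi--Yau duality $\Hom_\cc(X,Y\st)\cong D\Hom_\cc(Y,X\st)$, which pairs a created arrow against a map in the opposite direction and forces the residual counts to be the signed sum prescribed by Fomin--Zelevinsky. Finally I would assemble the three steps into the antisymmetric exchange matrix $b_{ab}=\#\{a\to b\}-\#\{b\to a\}$: the no-$2$-cycle hypotheses at $i$ guarantee that the cancellation is unambiguous, so the categorical counts collapse to $b'_{ia}=-b_{ia}$ and $b'_{jk}=b_{jk}+\tfrac12\bigl(|b_{ji}|\,b_{ik}+b_{ji}\,|b_{ik}|\bigr)$, which is exactly the identity $Q_{T'}=\crm_i(Q_T)$.
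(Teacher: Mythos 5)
First, a point of reference: the paper does not prove this statement at all; it is imported verbatim from [BIRS] (Buan--Iyama--Reiten--Scott) and used as a black box in Theorem \ref{theorem2} and Remark \ref{remark}. So your proposal has to be judged on its own merits, and on those merits it has a genuine gap. Your treatment of rule (M2) is essentially correct: given no loop at $i$, the multiplicities $[B:T_j]$ and $[B':T_j]$ in the exchange triangles of Theorem \ref{cluster mutation} do count the arrows $i\to j$ and $j\to i$ in $Q_T$, and exchanging the roles of the two triangles reverses the arrows at $i$. Likewise your factorization lemmas are fine: since $\cc(T_j,T_i\st)=0$ and $\cc(T_i,T_k\st)=0$, every map $T_j\to T_i^*$ factors through $v$ and every map $T_i^*\to T_k$ extends along $u'$.

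The gap is exactly where you flag it, and flagging it does not fill it: rules (M1) and (M3), i.e.\ the count of arrows $j\to k$ with $j,k\neq i$, are asserted ("should show'', "I would extract this from'') rather than proved. What is needed is a computation of $\rad/\rad^2$ of $\End_\cc(T')$ at the pair $(j,k)$, comparing compositions through $T_i$ (which are in $\rad^2$ for $\End_\cc(T)$ but may become irreducible for $\End_\cc(T')$) with compositions through $T_i^*$ (the reverse phenomenon), and showing that the net effect is precisely one new arrow per path $j\to i\to k$ followed by cancellation of 2-cycles and nothing else --- in particular that arrows $j\to k$ not interacting with $i$ survive unchanged, and that no map in $\rad(T_j,T_k)$ that was irreducible for both can be destroyed or duplicated. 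Appealing to "minimality plus the 2-CY duality'' does not by itself force these residual counts; the duality pairs $\cc(X,Y\st)$ with $\cc(Y,X\st)$, which says nothing direct about $\rad/\rad^2$ of $\Hom$-spaces in degree zero. In [BIRS] this step is the substance of the theorem and is handled by establishing and exploiting the algebra isomorphism $\End_\cc(T)/(e_i)\cong\End_\cc(T')/(e_{i^*})$ together with a local analysis at the mutated vertex; some such mechanism (or the Derksen--Weyman--Zelevinsky QP-mutation machinery in the 2-CY-tilted setting) has to replace your final paragraph. A smaller quibble: collapsing the conclusion to the skew-symmetric matrix $b_{ab}=\#\{a\to b\}-\#\{b\to a\}$ is lossy when $Q_T$ has 2-cycles away from $i$, which the hypotheses do not exclude, so the identity $Q_{T'}=\crm_i(Q_T)$ should be proved at the level of quivers, not of exchange matrices.
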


%%%%%%%%%%%%%%%%%%%%%%%%%%%%%%%%%%%%%%%%%%%%%%%%%%%%%%%%%%%%%%%%%%%%%%%%%%%%%%%%%%%%%%%%%%%%%%%%%%%%%%%
\subsection{Coherent sheaves on a weighted projective line of weight type $(2,2,2,2)$}
%%%%%%%%%%%%%%%%%%%%%%%%%%%%%%%%%%%%%%%%%%%%%%%%%%%%%%%%%%%%%%%%%%%%%%%%%%%%%%%%%%%%%%%%%%%%%%%%%%%%%%%
We follow \cite{[GL]}. Let $\lambda$ be a point in the projective line $\mathbb{P}^1$ over the ground field $k$. We assume that $\lambda\not=0, 1, \infty$.
Let $\mathbb{L}$ be the rank one abelian group with
generators $\vec{x}_{1}, \vec{x}_{2}, \vx_3, \vec{x}_{4}$ and defining
relations
\[ 2\vx_1=2\vx_2=2\vx_3=2\vx_4=:\vec{c}.\]The element $\vec{c}$ is called the \emph{canonical element} of
$\mathbb{L}$, and each element $\vec{x}\in \mathbb{L}$ can be
uniquely written in normal form
\[\vec{x}=\sum\limits_{i=1}^{4}l_{i}\vec{x}_{i}+l\vec{c},
\ \ \mbox{where} \ \ 0\leq  l_{i}< 2 \ \ \mbox{and} \ \ l\in
\mathbb{Z}.\] For any $\vec{x}\in \mathbb{L}$, define $\vec{x}\geq
0$ if $l\geq 0$ in the normal form of $\vec{x}$. Then $\mathbb{L}$
becomes a partial order group, and each $\vec{x}\in \mathbb{L}$
satisfies exactly one of the two possibilities: \[ \vec{x}\geq 0 \ \
\text{or}\ \ \vec{x}\leq \vec{\omega}+\vec{c},\] here
$\vec{\omega}=2\vec{c}-\sum\limits_{i=1}^{4}\vec{x}_{i}$ is
called the \emph{dualizing element} of $\mathbb{L}$.
There is a group homomorphism $\delta:\mathbb{L}\to\mathbb{Z}$ determined by $\delta(\vx_i)=1$ for $1\leq i\leq 4$.

Denote by $S$ the commutative algebra
\[S=k[X_{1}, X_2, X_3, X_4]/I:= k[x_{1},x_{2}, x_3, x_{4}],\] where
$I$ is the ideal generated by
$f_{1}=X_{3}^{2}-X_{2}^{2}+X_1^2$ and $f_2=X_4^2-X_2^2+\lambda X_1^2$. Then $S$ is $\mathbb{L}$-graded by setting
$\mbox{deg}(x_{i})=\vx_i \ (i=1,2,3,4)$, and $S$ carries a
decomposition into $k$-subspaces:
\[S=\bigoplus\limits_{\vec{x}\in
\mathbb{L}}S_{\vec{x}}.\]

The category of coherent sheaves on $\X$ can be defined as the
quotient of the category of finitely generated $\mathbb{L}$-graded
$S$-modules over the Serre subcategory of finite length modules as
follows
\[\coh\X:=\mod^{\mathbb{L}}(S)/\mbox{mod}_{0}^{\mathbb{L}}(S).\]
The free module $S$ gives the structure sheaf $\co$, and each line
bundle is given by the grading shift $\co(\vec{x})$ for a uniquely
determined element $\vec{x}\in \mathbb{L}$. Moreover, there is a
natural isomorphism
\[\Hom(\co(\vec{x}), \co(\vec{y}))=S_{\vec{y}-\vec{x}}.
\]

Denote by $\vect\X$ the full subcategory of $\coh\X$ formed by all
vector bundles, i.e. locally free sheaves, and by $\coh_0\X$ the
full subcategory formed by all sheaves of finite length, i.e.
torsion sheaves. Geigle and Lenzing \cite{[GL]} showed that each
coherent sheaf decomposes as a direct sum of a vector bundle and a torsion sheaf, and there are no non-zero morphisms from $\coh_0\X$ to
$\mbox{vect}\mathbb{X}$. Moreover, $\coh\X$ is a hereditary abelian
category with Serre duality of the form
\[D\Ext^1(X, Y)=\Hom(Y, X(\vec{\omega})),
\] which implies the existence of almost split sequences for $\coh\X$
with the Auslander-Reiten translation $\tau$ given by the grading
shift with $\vec{\omega}$.

The Grothendieck group $K_0(\X)$ of $\coh\X$ was computed by Geigle
and Lenzing \cite{[GL]}, and it was proved to be the vector space
with basis indexed by elements $\co(\vec{x})$ with $0\leq\vec{x}\leq
\vec{c}$, where we still write $X\in K_0(\X)$ for the class of an
object $X\in \coh\X$.
The Euler form on $K_0(\X)$ is defined as follows on classes of objects $X,Y\in\coh\X$:
\[\langle X,Y\rangle=\dim_{k}\Hom(X,Y)-\dim_{k}\Ext^{1}(X,Y).
\]

There are some important $\mathbb{Z}$-linear maps on $K_{0}(\mathbb{X})$, including \emph{determinant} $\det$, \emph{rank}
$\rk$ and \emph{degree} $\deg$. The determinant map is the group homomorphism $\det:
K_0(\X)\to \mathbb{L}$ given by
$\det(\co(\vec{x}))= \vec{x}.$ The degree
function is the composition of $\delta$ and $\det$, that is, it is
determined by
$\mbox{deg}(\co(\vec{x}))=\delta(\vec{x}).$
The rank function $\mbox{rk}: K_{0}(\mathbb{X})\rightarrow
\mathbb{Z}$ is characterized by
$\mbox{rk}(\co(\vec{x}))=1.$
For each non-zero object $X\in \coh\X$, define the \emph{slope} of
$X$ as $\mu X=\frac{\mbox{deg} X}{\mbox{rk}X}.$

Notice that the rank is strictly positive for a non-zero vector bundle and
vanishes for a torsion sheaf. The slope of a
vector bundle belongs to $\mathbb{Q}$, while it is infinity for a
torsion sheaf.
By \cite{[GL]}, for any two indecomposable objects $X, Y$ in
$\coh\X$,
\begin{equation}\label{no morphism from right to left}
\mbox{Hom}(X,Y)\neq 0
 \text{\ implies\ } \mu X\leq \mu Y.
\end{equation}
Moreover, we have the following result.
\begin{theorem}[Riemann-Roch Formula, \cite{[LM]}]
For each $X, Y\in \coh\mathbb{X}$, we have
\begin{equation}\label{RR-formula}
 \langle
X\oplus \tau X,Y\rangle=\rk X\deg Y-\deg X\rk Y=\rk X\rk Y(\mu Y-\mu X).
\end{equation}
\end{theorem}

%%%%%%%%%%%%%%%%%%%%%%%%%%%%%%%%%%%%%%%%%%%%%%%%%%%%%%%%%%%%%%%%%%%%%%%%%%%%%%%%%%%%%%%%%%%%%%%%%%%%%%%
\subsection{Stable category of vector bundles and the associated cluster category}\label{section frobenius category}
%%%%%%%%%%%%%%%%%%%%%%%%%%%%%%%%%%%%%%%%%%%%%%%%%%%%%%%%%%%%%%%%%%%%%%%%%%%%%%%%%%%%%%%%%%%%%%%%%%%%%%%

Recall from \cite{[KLM2]} that a sequence
\[0\rightarrow X^{\prime}\rightarrow X\rightarrow
X^{\prime\prime}\rightarrow 0\] in $\mbox{vect}\mathbb{X}$ is called
\emph{distinguished exact} if for each line bundle $L$ the induced
sequence
\[
0\rightarrow \Hom(L,X^{\prime})\rightarrow \Hom(L, X)\rightarrow
\Hom(L, X^{\prime\prime})\rightarrow 0\] is exact. Kussin, Lenzing
and Meltzer \cite{[KLM2]} proved that the distinguished exact
sequences define a Frobenius exact structure on the category
$\vect\X$, such that the system of all line bundles is the system of
all indecomposable projective-injectives. By a general result of
\cite{[H]}, the related stable category
\[\underline{\vect}\X=\vect\X/[\mathcal{L}]
\] is a triangulated category. The suspension functor $\st$ is given by the formation of co-syzygies.
It was proved in \cite{[KLM2]} that there is a triangle equivalence
\begin{equation}\label{II:equ}
\underline{\mbox{vect}}\mathbb{X}\simeq D^b(\coh\X).
\end{equation}
For simplification of notations, in the rest of the paper we denote
the stable category $\underline{\vect}\X$ by $\mathscr{D}$.

\begin{theorem}[\cite{[KLM2]}]\label{Serre duality}
\begin{itemize}
\item[(1)]$\mathscr{D}$ is
$\Hom$-finite, Krull-Schmidt and homologically finite.
\item[(2)]$\crd$ has Serre duality: For any two objects $X$ and $Y$ in $\crd$,
\[D\Ext^1(X, Y)=\mathscr{D}(Y, X(\vec{\omega})),
\]In particular, $\crd$ has Auslander-Reiten triangles, and the grading shift by $\vec\omega$ also serves
as the Auslander-Reiten translation $\tau$ for $\crd$.
\end{itemize}
\end{theorem}

As in \cite{[BKL]}, the cluster category $\crc$ associated to $\X$ is defined to be the orbit category $\crd/G^{\mathbb{Z}}$ under the action of the cyclic group generated by the  auto-equivalence $G=\tau^{-1}[1]$, where $\tau$ denotes the Auslander-Reiten translation
and $\st$ denotes the suspension functor of $\crd$.
More precisely, the cluster category $\crc$ has the same
objects as $\crd$, and for any objects $X, Y$, morphism spaces are
given by \[\crc(X, Y)=\bigoplus_{n\in \mathbb{Z}}\mathscr{D}(X,
G^{n}Y)\] with the obvious composition. This orbit category is a
2-Calabi-Yau triangulated category and the canonical
functor $\pi: \crd\to\crc$ is a triangle functor. We still denote by $\st$ the suspension functor of $\crc$.

Recall that a cluster tilting object $T'$ is \emph{reachable} from $T$ if there is a sequence of mutations
\[
T=T^{(0)}\rightsquigarrow T^{(1)}\rightsquigarrow \cdots\rightsquigarrow T^{(N)}=T'
\]such that the quiver of the endomorphism algebra $\End(T^{(i)})$ has neither loops nor 2-cycles for any $1\leq i\leq N$. Theorem \ref{theorem BIRS} implies that if a cluster tilting object $T'$ is reachable from $T$, then the quiver of the endomorphism algebra of $T'$ is mutation-equivalent to the quiver $Q$ of the endomorphism algebra of $T$. In particular, in the cluster category $\crc$ concerned, all quivers mutation-equivalent to $Q$ are obtained in this way (\confer
\cite{[BKL]}).

\section{The slope features}\label{Sec Slope}
Let $\X$ be a weighted projective line of weight type $(2,2,2,2)$. In this section, we present key features of indecomposable direct summands of tilting objects in the stable category $\crd=\underline{\vect}\X$ with respect to the slope.
\subsection{The interval category}
It was proved that the suspension functor $\st$ of $\crd$ induces a bijection $\alpha:\mathbb{Q}\to \mathbb{Q}$ on slopes, which is monotonically increasing, and satisfies $q<\alpha(q)$ for each $q\in\mathbb{Q}$. The following tubular factorization property is useful.
\begin{lemma}[{\cite[Thm. A.4]{[KLM2]}}]\label{lemma2} Let $X$ and $Y$ be indecomposable
in $\vect\X$ with slopes $\mu(X)=q$ and $\mu(Y)=q'$. If
$q'>\alpha(q)$ then every morphism $X\to Y$ factors through a direct
sum of line bundles.
\end{lemma}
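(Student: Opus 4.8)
The plan is to reduce the factorization statement to a vanishing of $\Ext^{1}$, and then to extract that vanishing from Serre duality together with the slope inequality \eqref{no morphism from right to left} and Lemma \ref{lemma1}. Since the line bundles are exactly the indecomposable projective-injectives of the Frobenius category $\vect\X$, a morphism $X\to Y$ factors through a sum of line bundles precisely when it factors through the injective envelope $\iota\colon X\to I_{X}$ (equivalently, when its image in $\underline{\vect}\X$ vanishes). First I would fix the defining conflation
\[
0\to X\xrightarrow{\ \iota\ } I_{X}\to X\st\to 0
\]
with $I_{X}$ a finite direct sum of line bundles. Applying the contravariant functor $\Hom(-,Y)$ produces a long exact sequence whose relevant segment is $\Hom(I_{X},Y)\xrightarrow{\iota^{*}}\Hom(X,Y)\to\Ext^{1}(X\st,Y)$. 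Thus it suffices to show $\Ext^{1}(X\st,Y)=0$: that forces $\iota^{*}$ to be surjective, so every $f\colon X\to Y$ is of the form $g\circ\iota$ and hence factors through the line-bundle sum $I_{X}$.

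The second step is to rewrite this $\Ext^{1}$ via the Serre duality $D\Ext^{1}(A,B)=\Hom(B,A(\vec{\omega}))$ recorded above, applied with $A=X\st$ and $B=Y$. This gives
\[
D\Ext^{1}(X\st,Y)\cong\Hom\bigl(Y,(X\st)(\vec{\omega})\bigr)=\Hom\bigl(Y,\tau(X\st)\bigr),
\]
so the whole problem is converted into showing that $\Hom(Y,\tau(X\st))$ vanishes. Here I would use that $X$ indecomposable forces $X\st$ to be an indecomposable vector bundle, and by Lemma \ref{lemma1} its slope is $\mu(X\st)=\alpha(\mu(X))=\alpha(q)$. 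Because $\X$ has genus one we have $\deg\vec{\omega}=0$, so the twist $\tau=(\vec{\omega})$ preserves both rank and degree, hence preserves slope; consequently $\tau(X\st)$ is again indecomposable of slope $\alpha(q)$.

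Finally I would invoke the slope obstruction for nonzero morphisms. Since $Y$ and $\tau(X\st)$ are indecomposable with $\mu(Y)=q'>\alpha(q)=\mu(\tau(X\st))$, the implication \eqref{no morphism from right to left} gives $\Hom(Y,\tau(X\st))=0$, whence $\Ext^{1}(X\st,Y)=0$ and the factorization follows from the first step. The part requiring the most care is the bookkeeping at the start: one must verify that $X\st$ may legitimately be treated as an \emph{indecomposable} vector bundle of slope $\alpha(q)$ (so that \eqref{no morphism from right to left} applies verbatim after the Serre-duality rewriting), and that ``factoring through a sum of line bundles'' is correctly identified with factoring through the injective-envelope inflation $\iota$ in the distinguished exact structure. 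Once these identifications are in place, the remaining steps are the two displayed reductions above, each of which is immediate from the quoted results.
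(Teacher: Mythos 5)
Your proposal is correct in outline, and there is nothing in the paper to compare it against: Lemma \ref{lemma2} is simply quoted from \cite[Theorem A.6]{[KLM2]} with no proof given, so your derivation is an independent argument. It is also an attractive one, because it uses only facts already recorded in Section 2: the identification of ``factors through a direct sum of line bundles'' with factoring through the injective envelope $\iota\colon X\to I_X$ (legitimate, since the line bundles are precisely the indecomposable projective-injectives, and any map to an injective extends along the inflation $\iota$), the long exact sequence of $0\to X\to I_X\to X\st\to 0$ reducing the claim to $\Ext^1(X\st,Y)=0$, Serre duality converting this into $\Hom(Y,(X\st)(\vec{\omega}))=0$, and then the slope obstruction (\ref{no morphism from right to left}) together with Lemma \ref{lemma1} and the genus-one identity $\delta(\vec{\omega})=0$, which makes the twist by $\vec{\omega}$ slope-preserving.

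The one point you flag but leave open is genuine and must be closed, because the argument actually fails without it: if the cosyzygy is formed along a non-minimal inflation into an injective, the cokernel can acquire line-bundle summands $L$, and the terms $\Ext^1(L,Y)$ need not vanish, so the sufficient condition $\Ext^1(X\st,Y)=0$ would simply be false for that representative. The fix is to insist that $I_X$ is the injective envelope. First, left minimality forces the cokernel $C$ to have no injective direct summand: if $C=J\oplus C'$ with $J\neq 0$ injective (hence projective, by the Frobenius property), the composite deflation $I_X\to C\to J$ splits, so $I_X=\ker(I_X\to J)\oplus J'$, and since $\pi\iota=0$ the map $\iota$ factors through the proper summand $\ker(I_X\to J)$, contradicting minimality of the envelope. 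Second, you may assume $X$ is not a line bundle (otherwise the statement is trivial, as every map factors through $X$ itself); then $X$, and hence $X\st$, is indecomposable and nonzero in $\underline{\vect}\X$, and an object without injective summands that is indecomposable in the stable category is indecomposable in $\vect\X$. With this supplement, $X\st$ is an honest indecomposable vector bundle of slope $\alpha(q)$ by Lemma \ref{lemma1}, $(X\st)(\vec{\omega})$ is indecomposable of the same slope, and (\ref{no morphism from right to left}) applied to $\Hom(Y,(X\st)(\vec{\omega}))$ with $q'>\alpha(q)$ completes the proof exactly as you describe.
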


For any $q\in\mathbb{Q}$, the \emph{interval category} $\crd_q$ is the full subcategory of $\crd$
obtained as the additive closure of all the indecomposable objects
with slopes in the half-open interval $(q, \alpha(q)]$. It was proved in \cite{[KLM2]} that $\crd_q$ is an abelian category
and there is an equivalence
\[\Phi_q:\crd_q\xrightarrow{\sim}\coh\X.\]
We first show that $\Phi_{q}$ preserves
the order of slopes.

\begin{lemma}\label{lemma6} For any indecomposable objects $F_{1},F_{2}\in \crd_q$,
\[
\mu F_{1}\leq\mu F_{2}\ \text{if and only if}\ \mu(\Phi_{q}(F_{1}))\leq\mu
(\Phi_{q} (F_{2})). \] Consequently,
\[
\mu F_{1}=\mu F_{2}\ \text{if and only if}\ \mu(\Phi_{q}(F_{1}))=\mu
(\Phi_{q} (F_{2})).
\]
\end{lemma}

\begin{proof} We first show that $\Phi_{q}$ commutes with the Auslander-Reiten translations. Recall that each connected component of the
Auslander-Reiten quiver of $\crd_q$ is a homogeneous tube of rank one or two.
Hence we only need to show that $\Phi_{q}(\tau E)=\tau \Phi_{q}(E)$ for any quasi-simple object $E\in \crd_q$.
Obviously, as an equivalence $\Phi_{q}$ preserves the quasi-simple objects, hence $\Phi_{q}(E)$ is quasi-simple. Observe that $$\Ext^1(E,  \tau E)\neq 0\neq \Ext^1(\tau E,  E),$$ it follows that \[\Ext^1(\Phi_{q}(E), \Phi_{q}( \tau E))\neq 0\neq \Ext^1(\Phi_{q}( \tau E),  \Phi_{q}(E)),
\] hence $\Phi_{q}(E)$ and $\Phi_{q}(\tau E)$ belong to the same tube.
Now using the following equivalences $$E\not\cong \tau E\Leftrightarrow\crd(E,  \tau E)= 0\Leftrightarrow\Hom(\Phi_{q}(E),\Phi_{q}( \tau E))=0\Leftrightarrow\Phi_{q}( E)\not\cong\Phi_{q}(\tau  E),$$ we conclude that $\Phi_{q}(\tau E)=\tau \Phi_{q}(E)$.
Consequently, $\Phi_{q}$ preserves rank-one tubes and rank-two tubes respectively.

Now we prove $\mu F_{1}\leq\mu F_{2}$ if and only if $\mu(\Phi_{q}(F_{1}))\leq\mu
(\Phi_{q} (F_{2}))$.
Observe that if $F_{1}, F_{2}$ belong to the same tube, then so do $\Phi_{q}(F_{1})$ and $\Phi_{q}(F_{2})$, hence $\mu(\Phi_{q}(F_{1}))=\mu
(\Phi_{q} (F_{2}))$. Without loss of generality, we assume $F_{1}, F_{2}$ belong to different tubes in the following. Then by Riemann-Roch Formula (\ref{RR-formula}), we have
\[
\begin{aligned}
 \mu F_{1}\leq\mu F_{2}&
\Longleftrightarrow\crd(F_{2},
F_{1}\oplus \tau F_{1})=0 \\
&\Longleftrightarrow\Hom(\Phi_{q}(F_{2}), \Phi_{q}(F_{1})\oplus \tau (\Phi_{q}(F_{1})))= 0
\\
&\Longleftrightarrow\mu(\Phi_{q}(F_{1}))\leq\mu
(\Phi_{q} (F_{2})).
\end{aligned}
\]
Consequently, we have $\mu F_{1}=\mu F_{2}$ if and only if $\mu(\Phi_{q}(F_{1}))=\mu
(\Phi_{q} (F_{2}))$.
\end{proof}

The following is an easy consequence.

\begin{corollary}\label{proposition3}
Let $F_{1},F_{2}$ be indecomposable objects in $\crd_q$. If $\mu F_{1}<\mu F_{2}$, then
$\crd(F_{1}, F_{2}\oplus\tau F_{2})\neq 0.$
\end{corollary}

\subsection{Slopes of indecomposable direct summands of tilting objects}
Recall that a sheaf $T$ in $\coh\X$ is called tilting, if
\begin{itemize}
\item[-] $T$ is rigid, i.e. $\Ext^1(T, T)=0$ and
\item[-] for any $X\in\coh\X$, the condition $\Ext^1(T, X)=0=\Hom(T, X)$ implies that $X=0$.
\end{itemize}
Similarly, in the stable category $\crd$, we say that an object $T$ is \emph{extension-free} if $\mathscr{D}(T, T[n])=0$ for each non-zero integer $n$.
An extension-free object $T\in\mathscr{D}$ is \emph{tilting} if for each non-zero object $X$, there exists some integer $n$ such that $\mathscr{D}(T, X[n])\neq
0.$
It was proved in \cite{[CLR]} that each basic tilting object $T$ in $\crd$ contains 6 indecomposable direct summands, i.e. $|T|=6$.

By using the equivalence $\Phi_q:\crd_q\xrightarrow{\sim}\coh\X$, we have the following results.

\begin{lemma}\label{lemma3}Let $X$ and $Y$ be indecomposable objects in $\crd_q$.
Then $\crd(X, Y[n])=0$ for any integer $n\not= 0, 1$.
In particular,  $T\in\crd_q$ is extension-free in $\mathscr{D}$ if and only
if $\mathscr{D}(T, T[1])=0$.
\end{lemma}

\begin{lemma}\label{Phi_q preserves tilting}
Let $T$ be an object in $\crd_q$. Then $T$ is tilting in $\crd$ if and only if $\Phi_q(T)$ is a tilting sheaf in $\coh\X$.
\end{lemma}

As mentioned before, being different from other tubular types, the direct summands of a tilting object for weight type $(2,2,2,2)$ have the following slope feature, which is basically due to Meltzer \cite[Cor. 10.1.1]{[M]}.

\begin{proposition}\label{proposition4}
Let $\X$ be a weighted projective line with weight type $(2,2,2,2)$ and $T=\bigoplus\limits_{i=1}^{6}T_{i}$ a basic tilting object in $\underline{\vect}\X$. For any $i=1,\cdots, 6$,  the slope $\mu T_i$ belongs to the closed interval $[q, \alpha(q)]$ for some $q\in\mathbb{Q}$.

\end{proposition}

\begin{proof}
Assume that ${\rm max}\{\mu T_i \,|\, 1\leq i\leq 6\}=\alpha(q)$ for some $q$. We need to show that $\mu T_i\geq q$ for each $i$. Under the equivalence $\Phi_q:\crd_q\xrightarrow{\sim}\coh\X$, we can identify the stable category $\crd$ with $D^b(\coh\X)$, where the extended-closed subcategory of $\crd$ generated by the indecomposable objects of slope $\alpha(q)$ (resp. $q$) corresponds to the torsion subcategory $\coh_0\X$ (resp. $\coh_0\X[-1]$). Then by Corollary 10.1.1 in \cite{[M]}, each summand $T_i$ corresponds to a coherent sheaf or a stalk complex $V[-1]$ for some torsion sheaf $V$. It follows that $\mu T_i\geq q$, we are done.
\end{proof}

%%%%%%%%%%%%%%%%%%%%%%%%%%%%%%%%%%%%%%%%%%%%%%%%%%%%%%%%%%%%%%%%%%%%%%%%%%%%%%%%%%%%%%%%%%%%%%%%%%%%%%%
\section{Tilting object with rank-two indecomposable direct summands}\label{Sec Rank}
%%%%%%%%%%%%%%%%%%%%%%%%%%%%%%%%%%%%%%%%%%%%%%%%%%%%%%%%%%%%%%%%%%%%%%%%%%%%%%%%%%%%%%%%%%%%%%%%%%%%%%%
Let $\X$ be a weighted projective line with weight type $(2,2,2,2)$. This section provides an explicit tilting object in the stable category consisting of only rank-two bundles. We still denote by $\crd$ the stable category $\underline{\vect}\X$ of vector bundles and by $\crc$ the associated cluster category. By the definition of the cluster category, we use the same notation for an object in $\crd$ and its image in $\crc$ under the canonical functor $\pi: \crd\to\crc$.

Recall that $\coh\X$ is a hereditary abelian
category with Serre duality of the form
\[D\Ext^1(X, Y)=\Hom(Y, X(\vec{\omega})),
\] which implies the existence of almost split sequences for $\coh\X$
with the Auslander-Reiten translation $\tau$ given by the grading
shift with $\vec{\omega}$.

\subsection{An initial cluster tilting object} Let $E$ be the Auslander bundle determined by the almost split sequence
\[0\to \co(\vec{\omega}) \to
E\to \co \to 0.\] For each $i=1,\cdots,4$, let $E_{i}$ be the central term of the following non-split exact
sequence
\[0\to \co(\vec{\omega}) \to
E_{i} \to \co(\vec{x}_{i})
\to 0. \]
We remind that such an exact sequence is unique up to isomorphism, and $E_{i}$ is denoted by $E\langle\vec{x}_i\rangle$ in \cite{[KLM2]}. Set
\[F=E(\vec{w}+\vec{c})[-1].
\]
Then $F$ has rank 3, which fits into the following exact sequence for each $1\leq i\leq 4$ (\confer\cite[Sec. 6]{[CLR]}):
\begin{equation}\label{exact sequences lemma}
0 \to
\co(\vec{\omega})\to F
\to E(\vec{\omega} +
\vec{x}_{i})\to 0.
\end{equation}
Moreover, by \cite[Thm. 6.2]{[CLR]}, the object
\[T_{\can}=E\oplus(\bigoplus\limits_{ i=1}^{4}E_{i} )\oplus
F\] is a tilting object in $\crd$ and the endomorphism algebra
$\End_{\crd}(T_{\can})$ is a canonical algebra of type (2,2,2,2).

\begin{proposition}\label{canonical cluster tilting}
The image of the tilting object $T_{\can}$ under the canonical functor $\pi:\crd\to\crc$ is a cluster tilting object in $\crc$.
\end{proposition}

\begin{proof} Let $X, Y$ be two indecomposable direct summands of $T_{\can}$. Note that the slopes $\mu(X), \mu(Y)$ belong to the interval $(\alpha^{-1}(1), 1]$. By Lemma \ref{Phi_q preserves tilting}, $\Phi_{\alpha^{-1}(1)}(T_{\can})$ is a tilting sheaf in $\coh\X$. According to \cite[Proposition 2.3]{[BKL]}, tilting sheaves in $\coh\X$ coincide with cluster tilting objects in the cluster category. Hence $\pi(T_{\can})$ is a cluster tilting object.
\end{proof}

\subsection{Exchange triangles}
To apply cluster tilting mutation, we shall make frequent use of the following results.
\begin{lemma}\label{exchange triangle1}For each $i=1,\cdots, 4$, there is a triangle in $\crc$
\[E_i\to F\to E(\vec{\omega}+\vec{x}_i)\to E_i\st.
\]
\end{lemma}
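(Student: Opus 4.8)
The plan is to realize the triangle already in $\crd$ and then transport it along the canonical triangulated functor $\pi\colon\crd\to\crc$. Since every line bundle is a projective--injective of the Frobenius category $\vect\X$, it becomes a zero object in $\crd$; in particular $F\oplus\co(\vx_i)\cong F$ in $\crd$. Hence it suffices to produce a \emph{distinguished} exact sequence
\[
0\to E_i\to F\oplus\co(\vx_i)\xrightarrow{\ (p,\,-q)\ }E(\vec\omega+\vx_i)\to 0
\]
in $\vect\X$: stabilising it yields a triangle $E_i\to F\to E(\vec\omega+\vx_i)\to E_i\st$ in $\crd$, and applying $\pi$ gives the assertion. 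That the cone in $\crd$ is literally $E(\vec\omega+\vx_i)$, and not a $G$-twist of it, is consistent with the slopes $\mu E_i=\tfrac12$, $\mu F=\tfrac23$, $\mu E(\vec\omega+\vx_i)=1$ all lying in the single fundamental domain $(\alpha^{-1}(1),1]$, while $\mu(E_i\st)=\alpha(\tfrac12)>1$.

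To build this sequence I would start from the surjection $p\colon F\twoheadrightarrow E(\vec\omega+\vx_i)$ furnished by Lemma \ref{exact sequences lemma}, whose kernel is $\co(\vec\omega)$. Choosing a nonzero morphism $q\colon\co(\vx_i)\to E(\vec\omega+\vx_i)$ and forming the pullback of $p$ along $q$, the Mayer--Vietoris sequence of the pullback square (using that $p$ is epic) produces exactly the displayed sequence, whose left-hand term $P$ is the extension of $\co(\vx_i)$ by $\co(\vec\omega)$ classified by $q^{*}[p]\in\Ext^{1}(\co(\vx_i),\co(\vec\omega))$. The first key step is to see that $q$ can be chosen so that $P\cong E_i$: applying $\Hom(\co(\vx_i),-)$ to the sequence of Lemma \ref{exact sequences lemma} and invoking Serre duality, one computes $\Ext^{1}(\co(\vx_i),\co(\vec\omega))\cong D\,\Hom(\co,\co(\vx_i))$ to be one-dimensional, so its unique non-split class is $[E_i]$; it then remains to check that the connecting map $\Hom(\co(\vx_i),E(\vec\omega+\vx_i))\to\Ext^{1}(\co(\vx_i),\co(\vec\omega))$ is nonzero, which lets one select $q$ with $q^{*}[p]=[E_i]$, forcing $P\cong E_i$.

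The harder step, and the main obstacle, is to verify that this exact sequence of vector bundles is genuinely \emph{distinguished} exact, i.e. that $\Hom(L,-)$ preserves its exactness for every line bundle $L$; equivalently that every connecting map $\partial_L\colon\Hom(L,E(\vec\omega+\vx_i))\to\Ext^{1}(L,E_i)$ vanishes. Here the slope estimates of Section 2 reduce the problem drastically: $\partial_L\neq 0$ forces both $\Hom(L,E(\vec\omega+\vx_i))\neq 0$ and, via Serre duality, $\Hom(E_i,L(\vec\omega))\neq 0$, so that $\tfrac12=\mu E_i\le\mu L\le\mu E(\vec\omega+\vx_i)=1$; since $L$ has integral slope this leaves only the finitely many line bundles of degree one. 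For each of these I would compute $\Ext^{1}(L,E_i)=D\,\Hom(E_i,L(\vec\omega))$ directly from the defining sequence of $E_i$ together with Serre duality, showing it vanishes (the non-split class $[E_i]$ making the relevant connecting map an isomorphism), whence $\partial_L=0$. Once distinguished exactness is established, stabilising and pushing along $\pi$ completes the proof, with $a_i$ the image of the projection $E_i=P\to F$.
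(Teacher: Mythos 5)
Your first half is essentially the paper's own construction. The paper likewise pulls back the sequence $0\to\co(\vec\omega)\to F\to E(\vec\omega+\vx_i)\to 0$ of Lemma \ref{exact sequences lemma} along the map $i\colon\co(\vx_i)\to E(\vec\omega+\vx_i)$ coming from the twisted almost split sequence $0\to\co(\vx_i)\to E(\vec\omega+\vx_i)\to\co(\vec\omega+\vx_i)\to 0$, and identifies the pullback with $E_i$. (Note that $\Hom(\co(\vx_i),E(\vec\omega+\vx_i))$ is one-dimensional, spanned by $i$, so your ``choice'' of $q$ is forced; and the nonvanishing of the connecting map that you postpone does hold, because $i^{*}$ is injective on $\Ext^1(E(\vec\omega+\vx_i),\co(\vec\omega))$ --- as $\Ext^1(\co(\vec\omega+\vx_i),\co(\vec\omega))=0$ --- and the class of the $F$-sequence is nonzero since $F$ is indecomposable.) Where you genuinely diverge is afterwards: the paper never verifies distinguished exactness of any specific sequence. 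It forms the cokernel sequence $0\to E_i\xrightarrow{a_i}F\to\co(\vec\omega+\vx_i)\to 0$ and pushes it out along the injective hull $E_i\hookrightarrow I(E_i)=\co(\vx_i)\oplus\bigl(\bigoplus_{j\neq i}\co(\vec\omega+\vx_j)\bigr)$, so that the resulting monomorphism into $F\oplus I(E_i)$ is automatically an inflation of the Frobenius structure, the third term is computed to be $E(\vec\omega+\vx_i)$ plus line bundles, and stabilization kills all line-bundle summands. Your route instead must check by hand that the Mayer--Vietoris sequence is distinguished exact.

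In that check your proposal contains a genuine error: it is \emph{not} true that $\Ext^1(L,E_i)=0$ for every line bundle $L$ of degree one. For example, $\Ext^1(\co(\vec\omega+\vx_i),E_i)=D\Hom(E_i,\co(\vx_i))\neq 0$ (recall $2\vec\omega=0$ for type $(2,2,2,2)$, and $\Hom(E_i,\co(\vx_i))$ contains the canonical projection); similarly, for $j\neq i$ one gets $\Ext^1(\co(\vx_j),E_i)=D\Hom(E_i,\co(\vec\omega+\vx_j))\cong k$, since $x_j\colon\co(\vec\omega)\to\co(\vec\omega+\vx_j)$ extends over $E_i$ (the obstruction lies in $\Ext^1(\co(\vx_i),\co(\vec\omega+\vx_j))=0$). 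So the computation you describe, carried out literally, contradicts your claim for four of the eight degree-one line bundles, and the proof stalls at this step as written. The argument is salvageable inside your own framework, because your reduction requires \emph{both} $\Hom(L,E(\vec\omega+\vx_i))\neq 0$ and $\Ext^1(L,E_i)\neq 0$: a direct check against the almost split sequence above shows that among degree-one line bundles only $L=\co(\vx_i)$ admits a nonzero map to $E(\vec\omega+\vx_i)$, and for exactly that $L$ your connecting-map argument correctly yields $\Ext^1(\co(\vx_i),E_i)=D\Hom(E_i,\co(\vec\omega+\vx_i))=0$. Thus the verification must be organized around vanishing of the $\Hom$ side, with the $\Ext$ computation reserved for the single bundle $\co(\vx_i)$; this delicate pointwise bookkeeping is precisely what the paper's injective-hull padding is designed to avoid.
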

\begin{proof}For any $i$, there is an almost split sequence in $\coh\X$
\begin{equation}\label{exact sequence of 4.2}
0\to\co(\vec{x}_{i})\xrightarrow{\iota_i}
E(\vec{\omega}+\vec{x}_{i})\to\co(\vec{\omega}+\vec{x}_{i})\to 0.
\end{equation}
Applying the functor $\Hom(-, \co(\vec{\omega}))$ to (\ref{exact
sequence of 4.2}), we get
\[
\Ext^{1}(E(\vec{\omega}+\vec{x}_{i}),
\co(\vec{\omega}))=\Ext^{1}(\co(\vec{x}_{i}), \co(\vec{\omega}))=k.
\]Then by (\ref{exact sequences lemma}), there exists a commutative diagram induced by pullback of $\iota_i$ and $\phi$:
\[
\xymatrix{
0 \ar[r] & \co(\vec{\omega}) \ar@{=}[d]\ar[r] & E_{i}\ar[d]^-{a_{i}} \ar[r]\ar@{}[dr]|-{\circlearrowleft} &  \co( \vec{x}_{i}) \ar[d]^-{\iota_i}\ar[r] & 0  \\
0 \ar[r] & \co(\vec{\omega}) \ar[r] & F\ar[r]^-{\phi}  & E(\vec{\omega}+\vec{x}_{i}) \ar[r]& 0 \ . \\
 }
\]
Moreover, we know that the right square is also a pushout. Hence
$a_{i}: E_{i}\to F$ is injective and
$$\Coker(a_{i})= \Coker(\iota_i) =
\co(\vec{\omega}+\vec{x}_{i}).$$ Then
we obtain the following exact sequence:
\begin{equation}\label{exact sequence of 4.3}
0\to E_{i}\xrightarrow{a_{i}} F\to\co(\vec{\omega}+\vec{x}_{i})\to
0.
\end{equation}
Denote by $I(E_{i})$ the injective hull of $E_{i}$. By \cite{[CLR]},
\[
I(E_{i})=\co(\vec{x}_{i})\oplus(\bigoplus_{j\neq i}\co(\vec{\omega}+\vec{x}_{j})).
\]
Consider the following pushout commutative diagram
\[
\xymatrix{
  0 \ar[r] & E_{i}\ar@{}[dr]|-{\circlearrowright}    \ar[d] \ar[r] & F \ar[d] \ar[r] & \co(\vec{\omega}+\vec{x}_{i}) \ar@{=}[d] \ar[r] & 0 \\
  0 \ar[r] & I(E_{i}) \ar[r]  & C \ar[r] & \co(\vec{\omega}+\vec{x}_{i}) \ar[r] & 0.}
\]
Notice that  for any $j\neq i$,
\[
\Ext^{1}(\co(\vec{\omega}+\vec{x}_{i}), \co(\vec{\omega}+\vec{x}_{j}))=0.
\]
Combining with (\ref{exact sequence of 4.2}), we get
\[
C=E(\vec{\omega}+\vec{x_{i}})\oplus(\bigoplus\limits_{j\neq i}\co(\vec{\omega}+\vec{x}_{j})) .
\]Hence, there exists a triangle in $\crd$
\[
  E_{i}\to F\to E(\vec{\omega}+\vec{x}_{i}) \to E_{i}[1].
\]
Since $\pi: \crd\to\crc$ is a triangle functor, we get what we want.
\end{proof}

\begin{lemma}\label{exchange triangle2}The following is a triangle in $\crc$
\[
F[-1]\to E(\vec{x}_1-\vec{x}_2)\to E_3\oplus E_4\to F.
\]
\end{lemma}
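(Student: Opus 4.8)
The plan is to obtain the asserted triangle as the image, under the canonical triangulated functor $\pi:\crd\to\crc$, of a triangle already living in $\crd$, built from the morphisms $a_3,a_4$ produced in Lemma \ref{exchange triangle1}. After a rotation the statement is equivalent to the existence in $\crd$ of
\[
E(\vx_1-\vx_2)\to E_3\oplus E_4\xrightarrow{(a_3,a_4)}F\to E(\vx_1-\vx_2)\st,
\]
where $(a_3,a_4)$ is assembled from the inclusions $a_i\colon E_i\hookrightarrow F$ (with $\Coker(a_i)=\co(\vec\omega+\vx_i)$) of Lemma \ref{exchange triangle1}. So the whole task is to identify the cocone of $(a_3,a_4)$ with $E(\vx_1-\vx_2)$. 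Before constructing anything I would record the numerology, since it both guides and checks the argument: with $\overline p=2$, $\delta(\vx_i)=1$ and $\delta(\vec\omega)=0$ one finds $\rk=2,\deg=0$ for $E(\vx_1-\vx_2)$, $\rk=2,\deg=1$ for each $E_i$, and $\rk=3,\deg=2$ for $F$. Thus the slopes $0\le\tfrac12\le\tfrac23$ increase, so $(a_3,a_4)$ is a genuine morphism of coherent sheaves; the degree identity $\deg(E_3\oplus E_4)=\deg E(\vx_1-\vx_2)+\deg F$ holds, but the rank identity fails by one. This rank defect is the signal that the honest $\coh\X$-kernel of $(a_3,a_4)$ is a line bundle, and that the rank-two bundle $E(\vx_1-\vx_2)$ will surface only after a correction by a projective-injective, exactly as the extra summands $\co(\vec\omega+\vx_j)$ ($j\ne i$) appeared in Lemma \ref{exchange triangle1}.

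Concretely, and in parallel with the proof of Lemma \ref{exchange triangle1}, I would aim to exhibit a distinguished exact sequence (conflation) in the Frobenius category $\vect\X$ of the form
\[
0\to E(\vx_1-\vx_2)\xrightarrow{\iota} E_3\oplus E_4\oplus L\xrightarrow{(a_3,a_4,\ell)}F\to 0,
\]
in which $L$ is a line bundle and the restriction of the epimorphism to $E_3\oplus E_4$ is $(a_3,a_4)$. The degree balance forces $\deg L=0$, so $L$ is a slope-zero line bundle (the candidate being dictated by the socle $\co(\vec\omega)\subset F$), and $\ell\colon L\to F$ is chosen so that $(a_3,a_4,\ell)$ is a deflation. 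Since $L$ is projective-injective it vanishes in $\crd=\underline{\vect}\X$, and this conflation then represents precisely the $\crd$-triangle $E(\vx_1-\vx_2)\to E_3\oplus E_4\xrightarrow{(a_3,a_4)}F\to E(\vx_1-\vx_2)\st$; applying $\pi$ and rotating produces the triangle in $\crc$ as stated. Equivalently, and perhaps more cleanly, one can compute the cocone of $(a_3,a_4)$ dually to Lemma \ref{exchange triangle1}, i.e.\ by pulling back $(a_3,a_4)$ along a projective (line-bundle) cover $P(F)\twoheadrightarrow F$ and reducing the resulting pullback modulo its line-bundle summands; an octahedron on the split inclusion $E_3\hookrightarrow E_3\oplus E_4$ together with the triangle $E_3\xrightarrow{a_3}F\to E(\vec\omega+\vx_3)\to E_3\st$ of Lemma \ref{exchange triangle1} reduces the problem further to identifying the \cone\ of a single rank-two map $E_4\to E(\vec\omega+\vx_3)$.

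The identification of the kernel $\Ker(a_3,a_4,\ell)$ (equivalently the cocone) with $E(\vx_1-\vx_2)$ is where the real work lies. I would carry it out by splicing the defining sequences $0\to\co(\vec\omega)\to E_i\to\co(\vx_i)\to0$, the sequences $0\to\co(\vec\omega)\to F\to E(\vec\omega+\vx_i)\to0$ of Lemma \ref{exact sequences lemma}, and the twisted Auslander sequence $0\to\co(\vec\omega+\vx_1-\vx_2)\to E(\vx_1-\vx_2)\to\co(\vx_1-\vx_2)\to0$, tracking the maps through the induced $\Hom$-spaces $S_{\vec y}$. At this point the canonical relations $x_3^{2}=x_2^{2}-\lambda_3 x_1^{2}$ and $x_4^{2}=x_2^{2}-\lambda_4 x_1^{2}$ in $S$ enter decisively: it is the linear independence of $x_1^2,x_2^2$ in $S_{\vec c}$, together with the distinct expansions of $x_3^2,x_4^2$ (using $\lambda_3\ne\lambda_4$), that makes the two subsheaves $a_3(E_3)$ and $a_4(E_4)$ of $F$ genuinely independent and forces the cocone to have rank two rather than rank one.

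I expect the main obstacle to be precisely this step: choosing the correct line-bundle correction $L$ (resp.\ projective cover) and verifying that the constructed sequence is \emph{distinguished} exact, so that it legitimately represents a triangle in $\crd$ and not merely an exact sequence in $\coh\X$. That this distinction is essential is already visible in the present setup, since the sequence $0\to\co(\vec\omega)\to F\to E(\vec\omega+\vx_i)\to0$ is exact in $\coh\X$ yet is not a conflation; it is exactly this gap between $\coh\X$-exactness and the Frobenius structure that compelled the pushout step in Lemma \ref{exchange triangle1}, and the same care will be required here.
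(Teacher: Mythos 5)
Your overall strategy does coincide with the paper's proof: the triangle is obtained from a line-bundle correction of $(a_3,a_4)$, and your ``equivalently, more cleanly'' variant --- pull $(a_3,a_4)$ back along the projective cover $P(F)\twoheadrightarrow F$ and discard line-bundle summands --- is exactly what the paper does; your rank/degree bookkeeping and your warning that distinguished exactness (not mere exactness in $\coh\X$) is what produces triangles in $\crd$ are both correct. The genuine gap is that the step you yourself call ``where the real work lies'', namely identifying the cocone with $E(\vec{x}_1-\vec{x}_2)$, is never carried out, and the mechanism you sketch for it would not deliver it. The paper's identification has two concrete stages. First, the pullback of $a_3$ along $a_4$ is computed: the induced cokernel $B$ is shown to be $\co(\vec{\omega}+\vec{x}_3)$ (the slope comparison $\mu E_4=\tfrac12$, $\mu\co(\vec{\omega}+\vec{x}_3)=1$ leaves only $B=0$ or $B=\co(\vec{\omega}+\vec{x}_3)$, and $\Hom(E_4,E_3)=0$ excludes $B=0$), after which rank and determinant bookkeeping in $K_0(\X)$ pins the pullback down to $\co(\vec{x}_3-\vec{x}_4)$, yielding the exact sequence $0\to\co(\vec{x}_3-\vec{x}_4)\to E_3\oplus E_4\to F\to 0$ in $\coh\X$. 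Second, pulling this back along $P(F)\to F$ gives an extension of $P(F)$ by $\co(\vec{x}_3-\vec{x}_4)$ that splits on every line-bundle summand of $P(F)$ except $\co(\vec{x}_1-\vec{x}_2)$, because among classes with $\delta(\vec{x})=0$ one has $\Ext^1(\co(\vec{x}),\co(\vec{x}_3-\vec{x}_4))\neq 0$ if and only if $\vec{x}=\vec{x}_1-\vec{x}_2$; since $\vec{x}_3-\vec{x}_4=\vec{\omega}+\vec{x}_1-\vec{x}_2$ in $\mathbb{L}$, the unique non-split extension there is the twisted Auslander bundle $E(\vec{x}_1-\vec{x}_2)$. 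Without these two computations there is no proof, only a plan.

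Moreover, the mechanism you propose for the missing step points in the wrong direction, and is in tension with your own earlier (correct) observation. The ``independence'' of the subsheaves $a_3(E_3)$ and $a_4(E_4)$, which you attribute to $\lambda_3\neq\lambda_4$ and the linear independence of $x_1^2,x_2^2$ in $S_{\vec{c}}$, governs only the kernel of $(a_3,a_4)$ inside $\coh\X$ --- and that kernel is the rank-one bundle $\co(\vec{x}_3-\vec{x}_4)$, as your rank count already told you; no independence statement can ``force the cocone to have rank two''. The rank-two object $E(\vec{x}_1-\vec{x}_2)$ arises exclusively from the second stage, i.e.\ from the non-split glueing against the single summand $\co(\vec{x}_1-\vec{x}_2)$ of the line-bundle correction, which is an $\mathbb{L}$-arithmetic and Serre-duality statement about line bundles, not a statement about the parameters $\lambda_i$. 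Relatedly, your guess that the correcting line bundle is ``dictated by the socle $\co(\vec{\omega})\subset F$'' fails: since $\Hom(E(\vec{x}_1-\vec{x}_2),\co(\vec{\omega}))=0$, a conflation $0\to E(\vec{x}_1-\vec{x}_2)\to E_3\oplus E_4\oplus\co(\vec{\omega})\to F\to 0$ would have inflation with vanishing component into $\co(\vec{\omega})$ and hence would split $\co(\vec{\omega})$ off from $F$, contradicting the indecomposability of $F$; cancelling the split injective summands in the paper's conflation $0\to C\to E_3\oplus E_4\oplus P(F)\to F\to 0$ shows that the correct correction is $L=\co(\vec{x}_1-\vec{x}_2)$.
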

\begin{proof}From (\ref{exact sequence of 4.3}), we obtain the following commutative diagram induced by the pullback of $a_{3}$
and $a_{4}$
\[
\xymatrix{
0 \ar[r] & A \ar@{}[dr]|-{\circlearrowleft}\ar[d]_-{b_{3}}\ar[r]^-{b_{4}} & E_{4}\ar[d]_-{a_{4}} \ar[r] & B \ar[d]_-{b}\ar[r] & 0  \\
0 \ar[r] & E_{3}\ar[r]^-{a_{3}} & F\ar[r]  & \co(\vec{\omega}+\vec{x}_{3})  \ar[r]& 0,\\
   }
\]where the induced maps $b_{3}, b_{4}$ and $b$ are all injective.
By the Snake Lemma we have an surjection $\Coker(a_4)=\co(\vec{\omega}+\vec{x}_{4})\twoheadrightarrow \Coker(b)$.
Observe that there are no morphisms between $\co(\vec{\omega}+\vec{x}_{3})$ and $\co(\vec{\omega}+\vec{x}_{4})$. We obtain that $\rk(\Coker(b))=0$ and then $\rk B=1$.
Notice that $\mu E_{4}=\frac{1}{2}$ and $\mu (\co(\vec{\omega}+\vec{x}_{3}))=1$.
We get $B=\co(\vec{\omega}+\vec{x}_{3})$.
It follows that the left square is also a
pushout, and
$\det A=\vec{x}_3-\vec{x}_4$ and $\rk A=1$, which ensures that
$A=\co(\vec{x}_{3}-\vec{x}_{4})$. Thus there is an exact sequence in $\coh\X$
\[
0\to\co(\vec{x}_{3}-\vec{x}_{4})\to E_{3}\oplus E_{4}\to F\to 0.
\]Denote by $P(F)$ the projective cover of $F$. By \cite{[CLR]},
\[
P(F)=\co(\vec{\omega})^{2}\oplus(\bigoplus\limits_{j\neq i}\co(\vec{x}_{i}-\vec{x}_{j})).
\]
Now consider the following pullback diagram
\[
\xymatrix{
  0 \ar[r]& \co(\vec{x}_{3}-\vec{x}_{4}) \ar@{=}[d] \ar[r] & C\ar@{}[dr]|-{\circlearrowleft} \ar[d]\ar[r] & P(F) \ar[d]\ar[r] & 0 \\
 0 \ar[r]  & \co(\vec{x}_{3}-\vec{x}_{4}) \ar[r] & E_{3}\oplus E_{4} \ar[r]& F \ar[r] & 0.}
\]
It is easy to see that for any $\vec{x}\in \mathbb{L}$ with $\delta(\vec{x})=0$,
\[
\Ext^{1}(\co(\vec{x}), \co(\vec{x}_{3}-\vec{x}_{4}))\neq 0 \ \text{if and only if} \
\vec{x}=\vec{x}_{1}-\vec{x}_{2}.
\]Thus in $\crd$,
\[C=E(\vec{x}_{1}-\vec{x}_{2}).
\]Similar to the proof of Lemma \ref{exchange triangle1}, there exists the following triangle in $\crc$
\[
F[-1]\to E(\vec{x}_{1}-\vec{x}_{2})\to E_{3}\oplus E_{4}\to F.
\]
\end{proof}

\subsection{Tilting object with rank-two bundles via mutation} This subsection proves Theorem \ref{I:thm1}.
\begin{proposition}\label{cluster tilting consisting of rank two bundles}
The object
\[
T_{\rk}=E\oplus
E(\vec{x}_{1}-\vec{x}_{2})\oplus(\bigoplus\limits_{i=3}^{4}
E_{i})\oplus(\bigoplus\limits_{j=1}^{2}E(\vec{\omega}+\vec{x}_{j}))
\]
is a cluster tilting object in $\crc$.
\end{proposition}

\begin{proof}By Proposition \ref{canonical cluster tilting},
\[T_{\can}=E\oplus(\bigoplus\limits_{ i=1}^{4}E_{i})\oplus F=E_1\oplus \overline{T}
\] is a cluster tilting object in $\crc$,
and the quiver of the endomorphism algebra $\End_{\crc}(T_{\can})$ has the following shape
\[
\begin{array}{cccc}
 \xymatrix@-1pc
 {&&&E_1 \ar[rrdd]^{a_1} \\
  &&&E_2 \ar[rrd]_{a_2}\\
 Q_1:& E  \ar[rruu] \ar[rru]\ar[rrd] \ar[rrdd]  & &&&F.\ar@2{->}[llll]\\
  &&&E_3 \ar[rru]^{a_3} \\
  &&&E_4 \ar[rruu]_{a_4}
  }
  \end{array}
\]
By Lemma \ref{exchange triangle1}, there exists a triangle in $\crc$
\[E_1\xrightarrow{a_1}F\to E(\vec{\omega}+\vec{x}_1)\to E_1\st,
\]where $a_{1}: E_1\to F$ is the minimal left $\add\overline{T}$-approximation easily known from the above quiver.
Then by Theorem \ref{cluster mutation},
\[T_\star=E\oplus(\bigoplus\limits_{i=2}^{4} E_{i})\oplus F\oplus E(\vec\omega+\vec{x}_{1})
\]is a cluster tilting object in $\crc$. And the quiver of
$\End_{\crc}(T_\star)$ has the following shape
\[
\begin{array}{cccc}
& \xymatrix@-1pc
{&&&E(\vec\omega+\vec{x}_{1})\ar[lldd] \\
 & &&E_2 \ar[rrd]_{a_2} \\
 Q_2:& E \ar[rru] \ar[rrd] \ar[rrdd]  &&&&F. \ar[lluu] \ar[llll]\\
 & &&E_3 \ar[rru]^{a_3} \\
 & &&E_4 \ar[rruu]_{a_4}
  } \end{array}
\]
Write $T_\star$ as $T_\star=E_2\oplus\overline{T_\star}$. Similarly,
we obtain that $a_{2}: E_2\to F$ is the minimal left
$\add\overline{T_\star}$-approximation in $\crc$ and
\[T_{\star\star}=E\oplus(\bigoplus\limits_{i=3}^{4} E_{i})\oplus F\oplus (\bigoplus\limits_{j=1}^{2}E(\vec\omega+\vec{x}_{j}))
\]is a cluster tilting object. The quiver of the endomorphism algebra $\End_{\crc}(T_{\star\star})$
has the following shape
\[
\begin{array}{cccc}
& \xymatrix@-1pc
 {&&&E(\vec\omega+\vec{x}_{1})\ar[lldd]\\
  &&&E(\vec\omega+\vec{x}_{2}) \ar[lld] \\
  Q_3:&E \ar[rrd] \ar[rrdd]   & &&&F. \ar[lluu] \ar[llu]\\
  &&&E_3 \ar[rru]^{a_3} \\
  &&&E_4 \ar[rruu]_{a_4}
  }
\end{array}
\]Write $T_{\star\star}$ as $F\oplus \overline{T_{\star\star}}$.
By Lemma \ref{exchange triangle2}, there exists a triangle in $\crc$
\[
F[-1]\to E(\vec{x}_1-\vec{x}_2)\to E_3\oplus E_4\xrightarrow{(a_3, a_4)} F,
\]where $(a_3, a_4): E_3\oplus E_4\to F$ is the minimal right $\add \overline{T_{\star\star}}$-approximation easily known from the above quiver.
Hence $T_{\rk}$ is a cluster tilting object in $\crc$ and the quiver of the
endomorphism algebra $\End_{\crc}(T_{\rk})$ has the following shape
\[
\begin{array}{cccc}\\
Q_4:& \xymatrix@-1pc {
E\ar[rr]^{}\ar[rrdd]^>>>>>{}&&E_{3}\ar[rr]^{}\ar[rrdd]^>>>>>{}&&E(\vec{\omega}+\vec{x}_{1})\ar@/_1pc/[llll]\ar@/_/[ddllll] \\
  \\
E(\vec{x}_{1}
-\vec{x}_{2})\ar[rr]^{}\ar[rruu]_>>>>>{}&&E_{4}\ar[rr]^{}\ar[rruu]_>>>>>{}&&E(\vec{\omega}+\vec{x}_{2}).\ar@/^1pc/[llll]\ar@/_/[uullll]
}\\ \ \\
\end{array}
\]
\end{proof}

\begin{remark}\label{remark}During the proof, we obtain three cluster tilting objects reachable from $T_{\can}$: $T_{\star}, T_{\star\star}$, and $T_{\rk}$. The quivers of the associated endomorphism algebras are mutation-equivalent. In the cluster category $\crc$, all quivers mutation-equivalent to $Q=Q_1$ are obtained in this way. That is, for any cluster tilting object $T$ in $\crc$, the quiver of the endomorphism algebra $\End_{\crc}(T)$ has been listed in Example \ref{example}.
\end{remark}

\begin{theorem}\label{theorem2}
The object
\[
T_{\rk}=E\oplus
E(\vec{x}_{1}-\vec{x}_{2})\oplus(\bigoplus\limits_{i=3}^{4}
E_{i})\oplus(\bigoplus\limits_{j=1}^{2}E(\vec{\omega}+\vec{x}_{j}))
\]
is tilting in $\crd$, and the quiver of the endomorphism
algebra $\End_\crd(T_{\rk})$ has the shape\[
\begin{array}{cccc}
 \Gamma_\crd:& \xymatrix@-1pc
   {E\ar[rr]^{}\ar[rrdd]^>>>>>{}&&E_{3}\ar[rr]^{}\ar[rrdd]^>>>>>{}&&E(\vec{\omega}+\vec{x}_{1}) \\
    \\
    E(\vec{x}_{1}-\vec{x}_{2})\ar[rr]^{}\ar[rruu]_>>>>>{}&&E_{4}\ar[rr]^{}\ar[rruu]_>>>>>{}&&E(\vec{\omega}+\vec{x}_{2}).}
\end{array}
\]
\end{theorem}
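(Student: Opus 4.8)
The plan is to show that $T_{\rk}$ is a tilting object in $\crd$ by passing to the cluster category $\crc$ and applying the machinery developed in Sections 3 and 4. The key observation is that $T_{\rk}$ is obtained from the canonical tilting object $T_{\can}=E\oplus(\bigoplus_{i=1}^{4}E_i)\oplus F$ by replacing the two summands $E_1,E_2$ and $F$ with $E(\vec{x}_1-\vec{x}_2)$, $E(\vec{\omega}+\vec{x}_1)$ and $E(\vec{\omega}+\vec{x}_2)$. By Lemma \ref{canonical cluster tilting}, $T_{\can}$ is a cluster tilting object in $\crc$, and by Theorem \ref{theorem1} it suffices to prove that $T_{\rk}$ is a \emph{cluster} tilting object in $\crc$. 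The natural route is therefore to realize $T_{\rk}$ as the result of a sequence of cluster-tilting mutations (Theorem \ref{cluster mutation}) starting from $T_{\can}$, using the exchange triangles supplied by Lemma \ref{exchange triangle1} and Lemma \ref{exchange triangle2}. Each mutation preserves the cluster tilting property automatically, so once the sequence of exchanges is identified the tilting claim for $T_{\rk}$ follows at once.

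\textbf{Identifying the mutation sequence.} First I would read off the exchange triangles. Lemma \ref{exchange triangle1} (with $i=1$ and $i=2$) gives the exchange triangles $E_j\to F\to E(\vec{\omega}+\vec{x}_j)\to E_j\st$ for $j=1,2$, showing that mutating $T_{\can}$ at the summand $E_1$ replaces it by $E(\vec{\omega}+\vec{x}_1)$ (the complementary summand being determined by a minimal $\add$-approximation over the remaining summands, which here is just $F$), and similarly mutating at $E_2$ replaces it by $E(\vec{\omega}+\vec{x}_2)$. Lemma \ref{exchange triangle2} then provides the triangle $F[-1]\to E(\vec{x}_1-\vec{x}_2)\to E_3\oplus E_4\to F$, which is exactly the exchange triangle realizing the mutation at $F$ that replaces $F$ by $E(\vec{x}_1-\vec{x}_2)$, with $B=E_3\oplus E_4\in\add T_0$. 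Carrying out these three mutations in turn (at $E_1$, at $E_2$, at $F$) transforms $T_{\can}$ into $T_{\rk}$, so by Theorem \ref{cluster mutation} each intermediate object, and hence $T_{\rk}$ itself, is cluster tilting in $\crc$. By Theorem \ref{theorem1}, $T_{\rk}$ is a tilting object in $\crd$, since all its indecomposable summands have slopes lying in a single interval $(a,\alpha(a)]$.

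\textbf{The quiver.} To obtain $\Gamma_\crd$, I would compute the quiver of $\End_\crd(T_{\rk})$ by tracking the quiver through the mutations using Theorem \ref{theorem BIRS}. The endomorphism algebra of $T_{\can}$ is the canonical algebra of type $(2,2,2,2)$ (Lemma \ref{our result}), whose quiver is the star-shaped quiver with central source, four length-two arms, and a central sink, i.e.\ precisely the quiver $Q_1=Q$ of Example \ref{example}. The three mutations at $E_1,E_2,F$ correspond, under the identification of vertices, to the Fomin--Zelevinsky quiver mutations $\crm_2,\crm_3,\crm_6$ of Example \ref{example}, so $Q_{T_{\rk}}=\crm_6\crm_3\crm_2(Q_1)=Q_4$, the last quiver displayed there. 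This matches the shape of $\Gamma_\crd$ after relabelling the vertices. The arrows of $\Gamma_\crd$ between the specific bundles can finally be confirmed directly: the maps $E\to E_3$, $E\to E_4$ and $E(\vec{x}_1-\vec{x}_2)\to E_3$, $E(\vec{x}_1-\vec{x}_2)\to E_4$ come from the defining sequences and from Lemma \ref{exchange triangle2}, while the maps $E_3,E_4\to E(\vec{\omega}+\vec{x}_1),E(\vec{\omega}+\vec{x}_2)$ come from the triangles of Lemma \ref{exchange triangle1}; one checks these span the irreducible maps by a dimension count of the Hom-spaces in $\crd$.

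\textbf{The main obstacle.} I expect the delicate point to be verifying that the applications of Theorem \ref{theorem BIRS} are legitimate at each step, namely that there are \emph{no loops and no $2$-cycles} at the mutated vertex in both the source and target quivers. This is exactly the hypothesis required for the categorical mutation to agree with the combinatorial quiver mutation, and it must be checked for each of the three intermediate algebras rather than assumed. A secondary subtlety is confirming that the complementary summand produced by each cluster-tilting mutation is genuinely the claimed bundle; while the exchange triangles in Lemmas \ref{exchange triangle1} and \ref{exchange triangle2} strongly suggest this, one should confirm that the middle term $B$ of each triangle is a minimal $\add T_0$-approximation, so that the triangle is the exchange triangle of Theorem \ref{cluster mutation} and the new summand is the unique indecomposable $T_i^*$. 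Once these approximation-minimality and no-$2$-cycle conditions are settled, the rest of the argument is formal.
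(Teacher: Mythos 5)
Your proposal is correct and follows essentially the same route as the paper: both pass to the cluster category via Theorem \ref{theorem1}, realize $T_{\rk}$ by mutating $T_{\can}$ successively at $E_1$, $E_2$ and $F$ using the exchange triangles of Lemmas \ref{exchange triangle1} and \ref{exchange triangle2} together with Theorems \ref{cluster mutation} and \ref{theorem BIRS}, and track the quiver to $Q_4$. The minimality and no-loop/no-$2$-cycle checks you flag as the delicate points are exactly what the paper handles by reading the approximations off the displayed quivers $Q_1$, $Q_2$, $Q_3$ at each step.
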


\begin{proof} Noting that $T_{\rk}\in\crd_{\alpha^{-1}(1)}$, we have $\Phi_{\alpha^{-1}(1)}(T_{\rk})\in\coh\X$.
According to \cite[Proposition 2.3]{[BKL]}, tilting sheaves in $\coh\X$ coincide with cluster tilting objects in the cluster category. It follows from Proposition \ref{cluster tilting consisting of rank two bundles} that $\Phi_{\alpha^{-1}(1)}(T_{\rk})$ is a tilting sheaf. By Lemma \ref{Phi_q preserves tilting}, $T_{\rk}$ is a tilting object in $\crd$. It is easy to see that the quiver of the endomorphism algebra of $T_{\rk}$ has the claimed shape.
\end{proof}

%%%%%%%%%%%%%%%%%%%%%%%%%%%%%%%%%%%%%%%%%%%%%%%%%%%%%%%%%%%%%%%%%%%%%%%%%%%%%%%%%%%%%%%%%%%%%%%%%%%%%%%
\section{Classification of endomorphism algebras}\label{Sec Classification}
%%%%%%%%%%%%%%%%%%%%%%%%%%%%%%%%%%%%%%%%%%%%%%%%%%%%%%%%%%%%%%%%%%%%%%%%%%%%%%%%%%%%%%%%%%%%%%%%%%%%%%%
Let $\X$ be a weighted projective line with weight type $(2,2,2,2)$. This section is devoted to classifications of endomorphism algebras of tilting objects in $D^b(\coh\X)$ and endomorphism algebras of tilting sheaves in $\coh\X$.

%%%%%%%%%%%%%%%%%%%%%%%%%%%%%%%%%%%%%%%%%%%%%%%%%%%%%%%%%%%%%%%%%%%%%%%%%%%%%%%%%%%%%%%%%%%%%%%%%%%%%%%
\subsection{Tilting objects corresponding to a given cluster tilting object}
%%%%%%%%%%%%%%%%%%%%%%%%%%%%%%%%%%%%%%%%%%%%%%%%%%%%%%%%%%%%%%%%%%%%%%%%%%%%%%%%%%%%%%%%%%%%%%%%%%%%%%%
For a complete classification of endomorphism algebras of tilting objects in $D^b(\coh\X)$, we use the triangle equivalence \eqref{II:equ}
\[\underline{\mbox{vect}}\mathbb{X}\simeq D^b(\coh\X).
\]As before, we denote the stable category $\vect\X$ by $\crd$. Recall that the cluster category $\crc$ is the orbit category $\crc/G^{\mathbb{Z}}$ under the action of the cyclic group generated by the autoequivalence $G=\tau^{-1}\st$. The canonical projection $\pi: \crd\to\crc$ is a triangle functor.

Let $T=\bigoplus\limits_{j=1}^{6}T_{j}$ be a basic tilting object in $\crd$. Without loss of generality, from now on we always assume $\mu T_{i}\leq \mu T_{i+1}$ for $1\leq i\leq 5.$ The next lemma shows that we can obtain a series of tilting objects
from $T$.

\begin{lemma}\label{lemma7}
For $1\leq i\leq 5$, the object
\[
(\bigoplus\limits_{j=1}^{i}GT_{j})\oplus(\bigoplus\limits_{j=i+1}^{6}T_{j})
\]is tilting in $\crd$.
\end{lemma}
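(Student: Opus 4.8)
We have a tilting object $T=\bigoplus_{j=1}^6 T_j$ in $\crd$ with $\mu T_j\le \mu T_{j+1}$, and by Lemma \ref{proposition4} all slopes lie in a half-open interval $[a,\alpha(a)]$. The claim is that replacing the first $i$ summands $T_1,\dots,T_i$ (those of smallest slope) by their images $GT_1,\dots,GT_i$ under the Auslander-Reiten type functor $G=\tau^{-1}\st$ again yields a tilting object. Let me think about what makes this true.

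The key observation is that $G$ is a triangle auto-equivalence of $\crd$ commuting with $\st$, so it preserves generation and extension-vanishing \emph{globally}; the subtlety is only that we are applying $G$ to \emph{some but not all} summands. Let me write the candidate as $T'=(\bigoplus_{j\le i}GT_j)\oplus(\bigoplus_{j>i}T_j)$.

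**Proof plan.** First I would verify that $T'$ is extension-free. By Theorem \ref{theorem1} it is equivalent to check that $T'$ is rigid in $\crc$, i.e. $\crc(T',T'[1])=0$. Since $G$ acts as the identity in $\crc$ (the cluster category is the orbit category under $G^{\mathbb Z}$, so $\pi(GT_j)=\pi(T_j)$), the image of $T'$ in $\crc$ equals the image of $T$. As $T$ is tilting in $\crd$, Theorem \ref{theorem1} makes $\pi(T)$ cluster tilting in $\crc$; hence $\pi(T')=\pi(T)$ is cluster tilting, and in particular rigid. This simultaneously gives both the rigidity and the generation property, so by Theorem \ref{theorem1} again $T'$ is tilting in $\crd$ \emph{provided} its summands all lie in a single interval $\crd(b,\alpha(b)]$. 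The entire content of the lemma therefore reduces to the slope bookkeeping needed to apply Theorem \ref{theorem1}.

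**The slope count — the main obstacle.** The real work is to check that the summands of $T'$ have slopes confined to one interval $(b,\alpha(b)]$, which is the hypothesis under which Theorem \ref{theorem1} (and Lemma \ref{proposition1}) were proved. After applying $G=\tau^{-1}\st$ to the $i$ lowest summands, their slopes jump up by roughly one application of $\alpha$ (since $\tau^{-1}$ is a grading shift by $-\vec\omega$ and $\st$ raises slope via $\alpha$), while $T_{i+1},\dots,T_6$ stay put. I would set $b=\mu T_{i+1}$ (or an appropriate value just below it) and show that $\mu(GT_j)$ for $j\le i$ together with $\mu T_j$ for $j>i$ all land in $(b,\alpha(b)]$. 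The upper summands satisfy $\mu T_j\le \alpha(a)$ by Lemma \ref{proposition4}; the shifted summands satisfy $\mu(GT_j)=\alpha(\mu\,\tau^{-1}T_j)$, and because $\mu T_j\ge a$ one gets $\mu(GT_j)$ bounded below by something exceeding $b$ and above by $\alpha(b)$. The delicate point is the \emph{boundary} cases where several summands share a slope and sit at the ends of the interval: I would lean on Lemma \ref{proposition3} and the tube structure used in the proof of Lemma \ref{proposition4} to rule out any summand escaping the interval $(b,\alpha(b)]$. Once the interval containment is established, the two triangulated properties transported from $\crc$ finish the proof; I expect the delicate slope estimate at the interval endpoints to be the only genuinely nontrivial step.
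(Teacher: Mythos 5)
Your reduction to Theorem \ref{theorem1} cannot be completed, and the failure occurs exactly where Lemma \ref{lemma7} has real content. Theorem \ref{theorem1} only applies to objects whose indecomposable summands all lie in one half-open interval $\crd(b,\alpha(b)]$, and for $T'=(\bigoplus_{j\le i}GT_j)\oplus(\bigoplus_{j>i}T_j)$ this containment is not a delicate estimate to be checked --- it is simply false whenever $\mu T_i=\mu T_{i+1}=:q$. Indeed, $\tau$ preserves slopes here, so $\mu(GT_i)=\mu(T_i\st)=\alpha(q)$, while $\mu T_{i+1}=q$; an interval $(b,\alpha(b)]$ containing $q$ forces $b<q$, and containing $\alpha(q)$ forces $\alpha(q)\le\alpha(b)$, i.e. $q\le b$, a contradiction. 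This impossibility is precisely what Corollary \ref{proposition5} records: a lifting $\bigoplus_j G^{k_j}T_j$ has all slopes in one interval if and only if the exponents $k_j$ are constant on each slope class, which fails for $T'$ in this case. Since the lemma claims the result for every $1\le i\le 5$ (and Theorem \ref{theorem4} later needs exactly the case where the cut falls inside a slope class), no slope bookkeeping can close this gap. There is also a circularity in your first step: to conclude that $\pi(T)$ is cluster tilting you invoke Theorem \ref{theorem1} for $T$ itself, but Lemma \ref{proposition4} only confines the slopes of $T$ to the closed interval $[a,\alpha(a)]$; when $\mu T_6=\alpha(\mu T_1)$ the hypothesis of Theorem \ref{theorem1} fails for $T$ as well, and the statement you actually need --- that every tilting object of $\crd$ projects to a cluster tilting object --- is Theorem \ref{theorem3}, whose proof (Case 2) depends on Lemma \ref{lemma7}.

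The paper avoids all of this by arguing directly in $\crd$, with no slope hypotheses at all. Extension-freeness of $T'$ follows from Serre duality and $G=\tau^{-1}\st$ (using $2\vec\omega=0$ for this weight type): one computes $\crd(GT_1,T_j[n])=D\crd(T_j[n-2],T_1)$ and $\crd(T_j,GT_1[n])=D\crd(T_1[n],T_j)$, and these vanish because $T$ is extension-free and $\crd(T_j,T_1)=0$ for $j\ge 2$. Generation is then immediate: $\crd(GT_1,T_1[2])=D\crd(T_1,T_1)\neq 0$ gives $T_1\in\langle T'\rangle$, hence $\crd=\langle T\rangle\subseteq\langle T'\rangle$. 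You should replace the cluster-category detour by this direct argument.
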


\begin{proof}We only prove the object
\[
T'=GT_{1}\oplus(\bigoplus\limits_{i=2}^{6}T_{i})
\] is tiling in $\crd$, the others are similar.

The following two equalities,
\[
\crd(GT_{1}, T_{i}[n]) =D\crd(T_{i}[n-2], T_{1})=0\ \text{for any}\
n\in\mathbb{Z}
\]and
\[\crd(T_{i}, GT_{1}[n])=D\crd(T_{1}[n], T_{i})=0\ \text{for any}\ n\neq
0,
\]
imply that $T'$ is extension-free. Note that\[
 \crd(GT_{1},
T_{1}[2])=D\crd(T_{1}, T_{1})\neq
0,\]
$T_{1}$ is in the thick subcategory generated by $T'$. Then $T'$ is tilting provided that $T$ is tilting in $\crd$.
\end{proof}

\begin{proposition}\label{theorem3}The image of $T$ under the projection $\pi$ is cluster tilting in $\crc$.
\end{proposition}
\begin{proof}Proposition \ref{proposition4} implies that
\[
\mu T_{6}\leq \mu (T_{1}[1]).
\]There are two cases to consider.
\begin{itemize}
\item[\emph{Case 1}]: $\mu T_{6}<\mu( T_{1}[1])$.
Then all the indecomposable direct summands are of slopes in the interval $(\mu(T_{6}[-1]), \mu (T_{6})]$.
Hence $\pi(T)$ is a cluster tilting object in
$\crc$.
\item[\emph{Case 2}]: $\mu T_{6}=\mu(T_{1}[1])$. Let $i$ be the largest index satisfying $\mu T_{1}=\mu T_{i}$.
Lemma \ref{lemma7} implies that
\[
T''=(\bigoplus\limits_{j=1}^{i}GT_{j})\oplus(\bigoplus\limits_{j=i+1}^{6}T_{j})
\]is tilting in $\crd$.
Clearly, the slope of each indecomposable direct summand of $T''$ is in the interval
$(\mu T_{1}, \mu (T_{1}[1])]$.
Then $\pi(T'')$ is a cluster tilting object in $\crc$.
Note that $T$ and $T''$ have the same image in $\crc$. We get what we want.
\end{itemize}
\end{proof}

Next we describe all the tilting objects
corresponding to a given cluster tilting object.
A \emph{lifting} of $\pi(T)$ to $\crd$ is an object $X$ in $\crd$ with
$\pi(X)=\pi(T)$. Obviously, $T$ is a lifting of $\pi(T)$, and any
other lifting has the form
\[
\bigoplus\limits_{i=1}^{6}G^{k_{i}}T_{i},
\ \text{where}\ k_{i}\in\mathbb{Z}.
\]

\begin{theorem}\label{theorem4}Let $T'=\bigoplus\limits_{i=1}^{6}G^{k_{i}}T_{i}$ be a lifting of $\pi(T)$.
Then $T'$ is tilting in $\crd$ if and only if $k_{i}\geq k_{j}\geq
k_{i}-1$ whence $\mu T_{i}<\mu T_{j}$.
\end{theorem}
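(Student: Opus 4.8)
The plan is to reduce the tilting property of $T'$ to a pairwise extension-vanishing condition and then read off the constraint on the $k_i$ from a slope-and-parity computation. First I would note that $T'=\bigoplus_i G^{k_i}T_i$ is extension-free if and only if $\crd(G^{k_i}T_i,G^{k_j}T_j[n])=0$ for all $i,j$ and all $n\neq 0$; applying the autoequivalence $G^{-k_i}$ to both arguments, this is equivalent to $\crd(T_i,G^{d}T_j[n])=0$ for all $n\neq 0$, where $d=k_j-k_i$. Thus everything is governed, pair by pair, by the single integer $d$.

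Next, since $G=\tau^{-1}[1]$ and $\vec{\omega}$ has degree zero, one has $\mu(G^{d}T_j[n])=\alpha^{d+n}(\mu T_j)$, and by Lemma \ref{proposition4} all slopes $\mu T_i$ lie in one fundamental interval $[a,\alpha(a)]$. Combining (\ref{no morphism from right to left}) with the tubular factorization Lemma \ref{lemma2} shows that $\crd(T_i,G^{d}T_j[n])$ can be nonzero only when $d+n\in\{0,1\}$, so only the two shifts $n=-d$ and $n=1-d$ need to be examined. I would evaluate these two boundary Hom-spaces with the Serre duality of Lemma \ref{Serre duality} together with the relation $\tau^{2}=\mathrm{id}$ on the rank-two tubes, using the input that $\crd(T_i,T_j)\neq 0$ whenever $\mu T_i<\mu T_j$ (which follows from Lemma \ref{proposition3}, because $\crd(T_i,\tau T_j)=D\crd(T_j,T_i[1])=0$ as $T$ is tilting). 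This computation should give: for $\mu T_i=\mu T_j$ all the relevant spaces vanish for every $d$, so equal-slope pairs impose no constraint; while for $\mu T_i<\mu T_j$ the value $n=-d$ contributes $\crd(T_i,\tau^{-d}T_j)$, nonzero exactly when $d$ is even and $d\neq0$, and, applying the same analysis to the opposite pair $\crd(T_j,G^{-d}T_i[n])$, a nonzero extension is forced exactly when $d$ is odd and $d\neq-1$. Intersecting the two vanishing requirements yields $d\in\{0,-1\}$, i.e. $k_i\geq k_j\geq k_i-1$, which is the asserted condition.

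For the necessity direction this already produces, whenever $d=k_j-k_i\notin\{0,-1\}$ for some pair with $\mu T_i<\mu T_j$, an explicit nonzero morphism $\crd(G^{k_i}T_i,G^{k_j}T_j[n])$ or $\crd(G^{k_j}T_j,G^{k_i}T_i[n])$ with $n\neq0$, contradicting extension-freeness. For sufficiency I would, in addition to the vanishing just established, verify that $T'$ generates: a lifting obeying the condition is reachable from $T$ (up to an overall power of $G$) by finitely many applications of Lemma \ref{lemma7}, each of which pushes a lowest-slope down-set up by $G$ and so increments the corresponding exponents by one while preserving the antitone, unit-step pattern; since every intermediate object is tilting by Lemma \ref{lemma7}, so is $T'$. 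Alternatively, generation can be obtained directly by showing $T_i\in\langle T'\rangle$ for each $i$ via the Auslander--Reiten triangles, exactly as in Lemma \ref{lemma4}.

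The main obstacle I expect is the precise evaluation of the two boundary Hom-spaces: slope-compatibility is only necessary for non-vanishing, so one must genuinely distinguish the even and odd cases using $\tau^{2}=\mathrm{id}$ and Serre duality, and one must treat with care the degenerate situations where $\mu T_i$ or $\mu T_j$ equals an endpoint $a$ or $\alpha(a)$, since there an additional shift value $d+n$ becomes slope-admissible and must be checked to still give zero. Establishing generation cleanly for the sufficiency direction is the secondary delicate point.
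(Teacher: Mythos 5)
Your proposal is correct and follows essentially the same route as the paper: the necessity direction rests on the same pairwise reduction via $G=\tau^{-1}[1]$, Serre duality, $\tau^{2}=\mathrm{id}$, and the nonvanishing of $\crd(T_i,T_j\oplus\tau T_j)$ from Lemma \ref{proposition3}, while sufficiency is obtained, as in the paper, by reordering the summands and invoking Lemma \ref{lemma7} (up to an overall power of $G$). Your worry about endpoint slopes is moot here, since the summands of $T$ lie in the half-open interval $(a,\alpha(a)]$, which is exactly the setting of the paper's argument.
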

\begin{proof}Assume $T'$ is tilting in $\crd$ and $\mu T_{i}<\mu T_{j}$. Then Corollary \ref{proposition3} implies that
\[
\crd(T_{i}, \tau T_{j}\oplus T_{j})\neq 0.
\]
Notice that
\[
\crd(G^{k_{i}}T_{i}, G^{k_{j}}T_{j}[k_{i}-k_{j}]) =\crd(T_{i},
\tau^{k_{i}-k_{j}}T_{j})
\]
and
\[\crd(G^{k_{j}}T_{j}, G^{k_{i}}T_{i}[k_{j}-k_{i}+1])
=\crd(T_{j}, \tau^{k_{j}-k_{i}}T_{i}[1]) =D\crd(T_{i},
\tau^{k_{i}-k_{j}+1}T_{j}).
\]Hence $T'$ is extension-free implies that
either $k_{i}-k_{j}=0$ or $k_{j}-k_{i}+1=0$, that is, $k_{i}\geq
k_{j}\geq k_{i}-1$.

Conversely, assume $\mu T_{i}< \mu T_{j}$ implies $k_{i}\geq
k_{j}\geq k_{i}-1$. Arrange the indecomposables $T_i$ with the same
slope to ensure
\[
k_{1}\geq k_{2}\geq\cdots\geq k_{6}\geq k_{1}-1.
\]
If $k_{1}= k_{2} =\cdots = k_{6}$,
then $T'=G^{k_{6}}T$ is a tilting object in $\crd$.
If else, there exists some $1\leq l\leq 5$,
such that
\[
k_{1}=\cdots=k_{l}>k_{l+1}=\cdots=k_{6}=k_1-1.
\]So
\[
T'=G^{k_{6}}(G(T_{1}\oplus\cdots\oplus T_{l})\oplus(T_{l+1}\oplus\cdots\oplus T_{6})).
\]By Lemma \ref{lemma7}, $T'$ is a tilting object in $\crd$.
\end{proof}

\begin{corollary}\label{proposition5}
Let $T'=\bigoplus\limits_{i=1}^{6}G^{k_{i}}T_{i}$ be a lifting of $\pi(T)$. If $T'$ is a tilting object in $\crd$, then
$\mu(G^{k_{i}}T_{i})\in(q, \alpha(q)]$ for any $i$ and some $ q\in \mathbb{Q}$ if and only if
$k_{i}=k_{j} \ \text{whence}\ \mu T_{i}= \mu T_{j}.$
\end{corollary}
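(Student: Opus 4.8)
The plan is to reduce the whole statement to the single observation that the autoequivalence $G=\tau^{-1}\st$ acts on slopes exactly by the map $\alpha$ of Lemma \ref{lemma1}. Since $\tau$ is the grading shift by $\vec{\omega}$ and $\deg(\vec{\omega})=0$ in the genus-one case, $\tau$ preserves slopes; together with $\mu(X\st)=\alpha(\mu X)$ this yields $\mu(G^{k}X)=\alpha^{k}(\mu X)$ for every indecomposable $X$ and every $k\in\mathbb{Z}$. I would also record at the outset that the reference tilting object $T$ sits inside a single interval category $\crd(a_{0},\alpha(a_{0})]$, as in the correspondence recalled at the beginning of this subsection, so that $\mu T_{1}>a_{0}$ and $\mu T_{6}\leq\alpha(a_{0})$, whence the \emph{strict} inequality $\mu T_{6}<\alpha(\mu T_{1})$ holds. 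Note that each $(a,\alpha(a)]$ is a fundamental domain for the $\alpha$-action on $\mathbb{Q}$, so the corollary is really asking which liftings keep all six summands inside one such domain.

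For the ``only if'' part I would argue by contradiction. Assume every $\mu(G^{k_{i}}T_{i})=\alpha^{k_{i}}(\mu T_{i})$ lies in a common interval $(a,\alpha(a)]$, but that $\mu T_{i}=\mu T_{j}=:q$ for some $i\neq j$ with, say, $k_{i}\geq k_{j}+1$. From $\alpha^{k_{j}}(q)>a$ and the monotonicity of $\alpha$ I obtain $\alpha^{k_{j}+1}(q)>\alpha(a)$, whence $\alpha^{k_{i}}(q)\geq\alpha^{k_{j}+1}(q)>\alpha(a)$, contradicting $\alpha^{k_{i}}(q)\leq\alpha(a)$. Thus equal slopes force equal exponents.

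For the ``if'' part I would feed the hypothesis into Theorem \ref{theorem4}: as $T'$ is tilting, $\mu T_{i}<\mu T_{j}$ forces $k_{i}\geq k_{j}\geq k_{i}-1$. Combined with the assumption that $k_{i}=k_{j}$ whenever $\mu T_{i}=\mu T_{j}$, the exponent $k_{i}$ is a function of the slope $\mu T_{i}$ alone, is non-increasing in the slope, and decreases by at most $1$ in total. Hence only two shapes occur. If all $k_{i}$ equal a common $\kappa$, then $T'=G^{\kappa}T$ and $G^{\kappa}$ sends the slope-interval $(a_{0},\alpha(a_{0})]$ onto the fundamental domain $(\alpha^{\kappa}(a_{0}),\alpha(\alpha^{\kappa}(a_{0}))]$, which contains every lifted slope. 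Otherwise there is a threshold slope $v^{*}$, attained and with next attained slope $v^{**}>v^{*}$, and after a global twist by a power of $G$ I may assume $k_{i}=1$ for $\mu T_{i}\leq v^{*}$ and $k_{i}=0$ for $\mu T_{i}>v^{*}$. Then the lifted slopes are $\alpha(\mu T_{i})$ for $\mu T_{i}\leq v^{*}$ and $\mu T_{i}$ for $\mu T_{i}>v^{*}$; using $\mu T_{6}<\alpha(\mu T_{1})\leq\alpha(v^{*})$ their maximum equals $\alpha(v^{*})$, and using $v^{**}\leq\mu T_{6}<\alpha(\mu T_{1})$ their minimum equals $v^{**}$, so $v^{*}<v^{**}\leq\alpha(v^{*})$ places all of them in $(v^{*},\alpha(v^{*})]$. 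In both cases the desired condition holds.

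The hard part will be the ``if'' direction, specifically the slope bookkeeping in the split case: one must ensure that the unique admissible drop of the exponents occurs at a genuine gap $v^{*}<v^{**}$ between two distinct attained slopes (this is exactly what the equal-slope/equal-exponent hypothesis provides) and that the lifted summands never span more than one $\alpha$-period. The strict inequality $\mu T_{6}<\alpha(\mu T_{1})$, i.e. the fact that $T$ already lies in a single fundamental domain, is precisely what prevents the lifted slopes from wrapping around; it is indispensable, and in the boundary situation $\mu T_{6}=\alpha(\mu T_{1})$ the statement would already fail for $T'=T$.
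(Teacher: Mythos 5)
Your proof is correct and follows essentially the same route as the paper: the ``only if'' direction is the identical monotonicity-of-$\alpha$ contradiction, and the ``if'' direction uses the two-shape structure of liftings coming from Theorem \ref{theorem4} (constant exponents, or a single drop by $1$ at a strict slope gap) and then bounds the lifted slopes inside one fundamental domain, exactly as the paper does with $a=\mu(G^{k_{6}}T_{l})$ (your $v^{*}$ after the global twist). Your explicit remarks that $\mu(G^{k}X)=\alpha^{k}(\mu X)$ and that the standing hypothesis $T\in\crd(a_{0},\alpha(a_{0})]$ gives the strict inequality $\mu T_{6}<\alpha(\mu T_{1})$ are sound and make visible what the paper uses implicitly.
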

\begin{proof}Assume that all the slopes $\mu(G^{k_{i}}T_{i})$ belong to $(q, \alpha(q)]$
and $\mu T_{i}=\mu T_{j}$. If $k_{i}\neq k_{j}$, we assume
$k_{i}>k_{j}$ without loss of generality. Then
\[
\mu(G^{k_{i}}T_{i})=
\mu(T_{i}[k_{i}])\geq\mu(T_{j}[k_{j}+1])=\alpha(\mu(T_{j}[k_{j}]))=\alpha(\mu(G^{k_{j}}T_{j})),
\]
which gives a contradiction.

On the contrary,
by Theorem \ref{theorem4},
the tilting object $T'$ has the form
\[
T'=G^{k_{6}}(G(T_{1}\oplus\cdots\oplus
T_{l})\oplus(T_{l+1}\oplus\cdots\oplus T_{6}))\ \text{for some}\ l.
\]Since $\mu T_{i}= \mu T_{j}$ implies $k_{i}= k_{j}$, we have $\mu T_{l}<\mu T_{l+1}$. Hence for any $1\leq i\leq
6$, we have
\[
\mu(G^{k_{i}}T_{i})\in [\mu(G^{k_{6}}T_{l+1}),
\mu(G^{k_{6}+1}T_{l})] \subseteq(\mu(G^{k_{6}}T_{l}),
\mu(G^{k_{6}+1}T_{l})].
\]
We are done.
\end{proof}

%%%%%%%%%%%%%%%%%%%%%%%%%%%%%%%%%%%%%%%%%%%%%%%%%%%%%%%%%%%%%%%%%%%%%%%%%%%%%%%%%%%%%%%%%%%%%%%%%%%%%%%
\subsection{Endomorphism algebras of tilting objects in $D^b(\coh\X)$}
%%%%%%%%%%%%%%%%%%%%%%%%%%%%%%%%%%%%%%%%%%%%%%%%%%%%%%%%%%%%%%%%%%%%%%%%%%%%%%%%%%%%%%%%%%%%%%%%%%%%%%%

In this subsection, we prove Theorem \ref{I:thm2}. As before, let
$T=\bigoplus\limits_{j=1}^{6}T_{j}$ be a basic tilting object in $\crd$
where $T_{i}\in \crd_q$ for some $q\in \mathbb{Q}$. Let
$\Gamma_\crc$ be the quiver of the endomorphism algebra
$\End_\crc(T)$ and $\Gamma_\crd$ be the one of $\End_\crd(T)$.
\begin{lemma}\label{lemma8}
For any $i\neq j$,
\begin{enumerate}
\item[(1)] $\crd(T_{i}, T_{j})\neq 0$ if and only if $\mu T_{i}<\mu T_{j}$;
\item[(2)] $\crc(T_{i}, T_{j})=0$ if and only if $\mu T_{i} =\mu T_{j}$.
\end{enumerate}
\end{lemma}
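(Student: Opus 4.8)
The plan is to establish (1) first and then deduce (2) from it by reducing the orbit morphism space in $\crc$ to a symmetric expression in $\crd$-morphisms. For the \emph{if} part of (1), suppose $\mu T_i<\mu T_j$. Lemma \ref{proposition3} gives $\crd(T_i, T_j\oplus\tau T_j)\neq 0$, so it suffices to annihilate the $\tau T_j$-summand. Since $\tau=(\vec{\omega})$, Serre duality (Lemma \ref{Serre duality}(3), with $X=T_j,\ Y=T_i$) yields
\[
\crd(T_i,\tau T_j)=\crd(T_i, T_j(\vec{\omega}))\cong D\crd(T_j, T_i\st),
\]
and the latter vanishes because $T$ is tilting, hence extension-free, and $i\neq j$. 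Thus $\crd(T_i, T_j)\neq0$.

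For the \emph{only if} part of (1), assume $\crd(T_i, T_j)\neq0$. Transporting along the equivalence $\Phi_a$ of Lemma \ref{lemma5} and combining (\ref{no morphism from right to left}) with Lemma \ref{lemma6}, a nonzero morphism forces $\mu T_i\leq\mu T_j$. Equality is then excluded: distinct indecomposable summands of $T$ of equal slope correspond under $\Phi_a$ to distinct exceptional sheaves lying at the bottom of rank-two tubes within one tubular family, and these are $\Hom$-orthogonal (exactly as in the proof of Lemma \ref{proposition4}). Hence $\mu T_i<\mu T_j$, which proves (1).

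For (2), I reduce the sum $\crc(T_i,T_j)=\bigoplus_{n\in\mathbb{Z}}\crd(T_i, G^nT_j)$ to two terms, paralleling the computation in Lemma \ref{proposition1}. Writing $G^nT_j=T_j(-n\vec{\omega})[n]$ and using $\mu(G^nT_j)=\alpha^n(\mu T_j)$ with $\mu T_i,\mu T_j\in(a,\alpha(a)]$: the slope estimate of Lemma \ref{lemma3} (via (\ref{no morphism from right to left})) kills every term with $n\leq-1$, while Lemma \ref{lemma2} kills every term with $n\geq 2$, leaving $\crc(T_i,T_j)=\crd(T_i, T_j)\oplus\crd(T_i, GT_j)$. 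The crucial step is to identify the second summand. Applying Serre duality to $\crd(T_i, GT_j)=\crd(T_i, T_j(-\vec{\omega})\st)$ gives $D\crd(T_j(-\vec{\omega}), T_i(\vec{\omega}))=D\crd(T_j, T_i(2\vec{\omega}))$, and here I invoke the type-specific identity $2\vec{\omega}=0$ in $\mathbb{L}$ (equivalently $\tau^2=\id$), which holds because $\vec{\omega}=2\vec{c}-\sum_{i=1}^4\vec{x}_i$ and $2\vec{x}_i=\vec{c}$ for weight type $(2,2,2,2)$. Therefore $\crd(T_i, GT_j)=D\crd(T_j, T_i)$, giving the symmetric formula
\[
\crc(T_i, T_j)=\crd(T_i, T_j)\oplus D\crd(T_j, T_i).
\]
Now (2) is immediate from (1): the right-hand side vanishes precisely when neither $\mu T_i<\mu T_j$ nor $\mu T_j<\mu T_i$ holds, that is, exactly when $\mu T_i=\mu T_j$.

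I expect the main obstacle to be the identification $\crd(T_i, GT_j)=D\crd(T_j, T_i)$: it rests on the relation $2\vec{\omega}=0$ special to weight type $(2,2,2,2)$, and one must ensure the slope truncation of the orbit sum is applied to the shifted objects $G^nT_j$ rather than only to sheaves lying in a single interval. Securing the $\Hom$-orthogonality of equal-slope summands in the \emph{only if} part of (1) is the secondary point that genuinely uses the structure of the rank-two tubes.
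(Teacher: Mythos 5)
Your proof is correct and takes essentially the same route as the paper's: part (1) uses Lemma \ref{proposition3} together with Serre duality and the orthogonality (from [CLR]) of equal-slope summands exactly as the paper does, and part (2) rests on the same two-term truncation $\crc(T_i,T_j)=\crd(T_i,T_j)\oplus\crd(T_i,GT_j)$ and the identification $\crd(T_i,GT_j)=D\crd(T_j,T_i)$ coming from $2\vec{\omega}=0$. The only difference is organizational: you establish the symmetric formula for every pair $i\neq j$ and read off both directions of (2) from it, which incidentally makes rigorous the ``without loss of generality'' step in the paper's contradiction argument (that step tacitly needs $\crc(T_i,T_j)=0\Leftrightarrow\crc(T_j,T_i)=0$, which is exactly what your formula provides).
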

\begin{proof} (1) By \cite{[CLR]},
the indecomposable direct summands of $T$ lie in the bottom of tubes
of rank two, and they are orthogonal to each other if they have the
same slope. Hence $\crd(T_{i}, T_{j})\neq 0$ implies $\mu T_{i}< \mu
T_{j}$.

Conversely, by Corollary
\ref{proposition3},
$\mu T_{i} <\mu T_{j}$ implies that
\[
\crd(T_{i}, T_{j}\oplus\tau T_{j})\neq 0.
\]
But
\[
\crd(T_{i}, \tau T_{j})=D\crd(T_{j}, T_{i}[1])=0.
\]Thus $\crd(T_{i}, T_{j})\neq 0$.

(2) Let $\crc(T_{i}, T_{j})=0$. If $\mu
T_{i} \neq \mu T_{j}$,  without loss of generality we assume $\mu
T_{i} <\mu T_{j}$. Then $\crd(T_{i},
T_{j})\neq 0$ by (1). It follows that $\crc(T_{i}, T_{j})\neq 0$, which is a
contradiction.

On the contrary, $\mu T_{i} =\mu T_{j}$ implies that $(T_{i}, T_{j})$ is an orthogonal pair (\confer \cite{[CLR]}).
Hence
\[
\crc(T_{i}, T_{j}) =\crd(T_{i}, T_{j})\oplus\crd(T_{i},
GT_{j})=\crd(T_{i}, T_{j})\oplus D\crd(T_{j}, T_{i}) =0.
\]
\end{proof}

Let $s_{i}: T_{i}\to U_{i}$ be the minimal left $\add(T\backslash
T_{i})$-approximation of $T_{i}$ in $\crc$.

\begin{lemma}\label{lemma9}
Assume $\Gamma_\crc$ has no 2-cycles. Let $T_m, T_n$ be two indecomposable direct summands of $T$ satisfying the following two conditions:
\begin{itemize}
\item[(a)] $s_m$ and $s_n$ map to the same object $U$;
\item[(b)] for each indecomposable direct summand $T_i$ of $U$,
\[
\dim\crc(T_m, T_i)=1=\dim\crc(T_n, T_i);
\]
\end{itemize}
then $\mu T_m=\mu T_n$.
\end{lemma}
\begin{proof}For contradiction,
we assume $\mu T_m<\mu T_n$. Then
$\crd(T_m, T_n)\neq 0$ by Lemma \ref{lemma8} (1). So there
exists a path $\rho$ from $T_m$ to $T_n$ in $\Gamma_\crd$ and
then in $\Gamma_\crc$. By condition (a), the length of $\rho$ is
greater than one. Hence there exists at least one indecomposable
direct summand $T_i$ of $U$, such that
\[
\mu T_m <\mu T_i <\mu T_n .
\]
Furthermore, we claim that for any indecomposable summand $T_j$ of
$U$,
\begin{equation}\label{assumption result 6}
\mu T_m <\mu T_j <\mu T_n .
\end{equation} In fact, if $\mu T_j \geq\mu T_n $ for some $j$,
according to condition (b), we get
\[
\dim\crd(T_m, T_j)=1=\dim\crd(T_n, T_j).
\]
Then by condition (a), the composition $T_m\to T_n\to T_j$ vanishes, which
induces an arrow from $T_j$ to $T_m$ in $\Gamma_\crc$ since $\crc(T,
T)$ can be explained as a trivial-extension and then a
relation-extension algebra of $\crd(T, T)$ (\confer \cite{[Z], [ABS]}).
Hence a 2-cycle between $T_m$ and $T_j$ appears in $\Gamma_\crc$ which is a
contradiction. If $\mu T_j \leq\mu T_m $ for some $j$, then
$\crd(T_j, T_i)\neq 0$ by Lemma \ref{lemma8} (1). Moreover, according
to condition (b),
\[
\dim\crd(G^{-1}T_n, T_j)=1=\dim\crd(G^{-1}T_n, T_i).
\]
Similar arguments show that a 2-cycle between $T_n$ and $T_i$
appears in $\Gamma_\crc$, which is a contradiction. Thus the claim
(\ref{assumption result 6}) holds. It follows that
\[
\crc(T_n, U) =\crd(T_n, GU).
\]
Hence the approximation $s_n: T_n\to U$ in $\crc$ lifts to a triangle in $\crd$
\[
\varepsilon: T_n\xrightarrow{s_{n}} GU\to T_n^{\ast}\to T_n[1].
\]Applying $\crd(T_m,-)$ to $\varepsilon$,
we obtain
\begin{equation}\label{assumption result 7}
\crd(T_m,T_n^{\ast}[-1])\neq 0.
\end{equation}
On the other hand, $\pi(\varepsilon)$ is the following triangle in $\crc$:
\[
\overline{\varepsilon}: T_n \xrightarrow{s_{n}} U\to T_n^{\ast}\to T_n[1].
\]That is, $T_n^{\ast}$ is a complement of the almost complete cluster
tilting object $T\backslash T_n$. But $[2]=G^2$ in $\crd$,
\[
\crc(T_m, T_n^{\ast}[-1])=\crc(T_m, T_n^{\ast}[1])=0,
\] which gives a contradiction to (\ref{assumption result 7}). This finishes the proof.
\end{proof}

We are now in the position to prove Theorem \ref{I:thm2}.
\begin{proof} [\textbf {Proof of Theorem \ref{I:thm2}}]Let $\Lambda$ be a finite dimensional $k$-algebra. Assume $\Lambda$ is the endomorphism algebra of some tilting object $T_c$ in $D^{b}(\coh\X)$. We regard $T_c$ as a
tilting object in $\crd$ and $\Lambda=\End_\crd(T_c)$. By Proposition
\ref{theorem3}, $\pi(T_c)$ is a cluster tilting object in $\crc$.
Hence the quiver $\Gamma$ of the endomorphism algebra
$\End_\crc(\pi(T_c))$ belongs to list (\ref{quiver mutation
classes}) according to Remark \ref{remark}. We then suppose $\pi(T_c)=\pi(T)$, where
$T=\bigoplus\limits_{i=1}^{6}T_{i}\in\crd_q$ for some $q\in\mathbb{Q}$. As before, we
assume $\mu T_{i}\leq \mu T_{i+1}$ for $1\leq i\leq 5$.

If $\Gamma=Q_{1}$, then by Lemmas \ref{lemma8} and \ref{lemma9}, we can assume
\[
\mu T_{1}< \mu T_{2}=\mu T_{3}=\mu T_{4}=\mu T_{5}<\mu T_{6}.
\]
By Theorem \ref{theorem4}, $T_c$ has the form(under the equivalence
$G^{k_{6}}$)
\[
\bigoplus\limits_{i=1}^{6}T_{i}\ \text{or}\ GT_{1}\oplus(\bigoplus\limits_{i=2}^{5}G^{k_{i}}T_{i})\oplus
T_{6},
\]where $k_{i}=0$ or $1$ for $2\leq i\leq 5$.
For some choice of the representatives for the arrows, $\Lambda$ is
isomorphic to $B_{11}$, $B_{12}$, $A_{13}$, $A_{14}$, $ A_{15}$ or
$B_{13}$ in List \ref{list}.

Similarly, one can prove that if $\Gamma=Q_{2}$, then $\Lambda$ is
isomorphic to $B_{21}$, $B_{22}$, $A_{23}$, $A_{24}$, $ B_{23}$ or
$B_{24}$; if $\Gamma=Q_{3}$, then $\Sigma$ is isomorphic to
$B_{31}$, $B_{32}$, or $A_{33}$; if $\Gamma=Q_{4}$, then $\Lambda$ is
isomorphic to $B_{41}$ or $A_{42}$ in List \ref{list}.

Conversely, we claim that each algebra in List \ref{list} can be realized by
a tilting object in $\crd$. In fact, by \cite[Theorem 6.2]{[CLR]} we
know that the tilting object $T_{\can}$ gives a realization of the algebra $B_{11}$. Combining with
Theorem \ref{theorem4} and using the method similar to the proof of
\cite[Theorem 6.2]{[CLR]}, it is easy to check that by replacing the
summand $E$ with $GE$ for $T_{\can}$, we get a realization of
$B_{12}$, i.e. the object $GE\oplus(\bigoplus\limits_{
i=1}^{4}E_{i})\oplus F$ is tilting in $\crd$ with endomorphism
algebra $B_{12}$. Similarly, for $j=1,2,3,4$, by replacing the
summand $E\oplus (\bigoplus_{i=1}^{j} E_i)$ of $T_{\can}$ with its
image under the functor $G$, we get realizations of $A_{13}, A_{14},
A_{15}$ and $B_{13}$ respectively. Analogously, using the tilting
objects $T_{\star}, T_{\star\star}$ and $T_{\rk}$ appeared in
Theorem \ref{theorem2} and combining with Theorem \ref{theorem4}, we
can get realizations of all the other algebras in List \ref{list}. This
finishes the proof.
\end{proof}

\begin{remark} In List \ref{list}, we also provide another realization for each algebra by tilting complexes in $D^b(\coh\X)$ with line bundles and simples sheaves (up to suspension shift).

\end{remark}
%%%%%%%%%%%%%%%%%%%%%%%%%%%%%%%%%%%%%%%%%%%%%%%%%%%%%%%%%%%%%%%%%%%%%%%%%%%%%%%%%%%%%%%%%%%%%%%%%%%%%%%
\subsection{Endomorphism algebras of tilting sheaves in $\coh\X$ }
%%%%%%%%%%%%%%%%%%%%%%%%%%%%%%%%%%%%%%%%%%%%%%%%%%%%%%%%%%%%%%%%%%%%%%%%%%%%%%%%%%%%%%%%%%%%%%%%%%%%%%%

This subsection is devoted to proving Theorem \ref{I:thm3}.

\begin{proof}[\textbf{Proof of Theorem \ref{I:thm3}}]Let $\Lambda'$ be a finite dimensional $k$-algebra. Assume $\Lambda'$ is the endomorphism algebra of some tilting object in $\coh\X$. Since $\coh\X$ is equivalent to some interval category $\crd_q$, $\Lambda'$ can be viewed as
the endomorphism algebra of a tilting object in $\crd_q$ for some $q$, which corresponds to a cluster tilting object in $\crc$. Assume $T=\bigoplus\limits_{i=1}^{6}T_{i}\in\crd_q$ is a tilting object with $\mu T_{i}\leq \mu T_{i+1}$ for $1\leq i\leq 5$. The quiver $\Gamma$ of the endomorphism algebra
$\End_\crc(\pi(T))$ belongs to list (\ref{quiver mutation
classes}). If $\Gamma=Q_{1}$, by Lemmas \ref{lemma8} and
\ref{lemma9}, we assume
\[
\mu T_{1}<\mu T_{2}=\mu T_{3}=\mu T_{4}=\mu T_{5}<\mu T_{6}.
\]Then according to Corollary
\ref{proposition5}, a lifting of $\pi(T)$ in $\crd_{q'}$ (for some $q'$) has one of the
following forms(under the equivalence $G^{k_{6}}$):
\[
\bigoplus\limits_{i=1}^{6}T_{i},\ \ \ \
GT_{1}\oplus(\bigoplus\limits_{i=2}^{6}T_{i}), \ \ \ \
(\bigoplus\limits_{i=1}^{5}GT_{i})\oplus T_{6}.
\]For some choice of the representatives for the arrows,
we obtain that the endomorphism algebras $\Lambda'$ is isomorphic to
$B_{11}$, $ B_{12}$ or $B_{13}$ in List \ref{list}.

Similarly, one can prove that if $\Gamma=Q_{2}$, then $\Lambda'$ is
isomorphic to  $B_{21}$, $B_{22}$, $ B_{23}$ or $B_{24}$; if
$\Gamma=Q_{3}$, then $\Lambda'$ is isomorphic to $B_{31}$ or $B_{32}$;
if $\Gamma=Q_{4}$, then $\Lambda'$ is isomorphic to $B_{41}$.

Conversely, the tilting object corresponding to $B_{ij}$ appeared in the proof of Theorem \ref{I:thm2}
gives a tilting sheaf we need.
\end{proof}

%%%%%%%%%%%%%%%%%%%%%%%%%%%%%%%%%%%%%%%%%%%%%%%%%%%%%%%%%%%%%%%%%%%%%%

%%%%%%%%%%%%%%%%%%%%%%%%%%%%%%%%%%%%%%%%%%%%%%%%%%%%%%%
%%% Acknowledgements. 致谢
%%%%%%%%%%%%%%%%%%%%%%%%%%%%%%%%%%%%%%%%%%%%%%%%%%%%%%%
\vspace{0.2cm} \noindent{\bf Acknowledgments.}
{This work was
partially supported by the National Natural Science Foundation of China (Grant No. 11571286, 11871404, 11801473) and the Fundamental Research Funds for the Central Universities of China (Grant No. 20720180002, 20720180006). The authors would like to thank X.W. Chen, B. M. Deng and H. Lenzing for useful comments.}

%%%%%%%%%%%%%%%%%%%%%%%%%%%%%%%%%%%%%%%%%%%%%%%%%%%%%%%
%%% Conflict of interest. 作者利益声明
%%%%%%%%%%%%%%%%%%%%%%%%%%%%%%%%%%%%%%%%%%%%%%%%%%%%%%%
%\InterestConflict

%%%%%%%%%%%%%%%%%%%%%%%%%%%%%%%%%%%%%%%%%%%%%%%%%%%%%%%
%%% Supplements. 补充材料, 非必选
%%%%%%%%%%%%%%%%%%%%%%%%%%%%%%%%%%%%%%%%%%%%%%%%%%%%%%%
%\Supplements{}

%%%%%%%%%%%%%%%%%%%%%%%%%%%%%%%%%%%%%%%%%%%%%%%%%%%%%%%
%%% Reference section. 参考文献
%%% citation in the content using "some words~\cite{1,2}".
%%% ~ is needed to make the reference number is on the same line with the word before it.
%%%%%%%%%%%%%%%%%%%%%%%%%%%%%%%%%%%%%%%%%%%%%%%%%%%%%%%

%%%%%%%%%%%%%%%%%%%%%%%%%%%%%%%%%%%%%%%%%%%%%%%%%%%%%%%
%%% Appendix sections. 附录章节, 非必选
%%%%%%%%%%%%%%%%%%%%%%%%%%%%%%%%%%%%%%%%%%%%%%%%%%%%%%%
\begin{appendix}

\section{\label{sec:compint}}

In this appendix, we give a complete list of endomorphism algebras of tilting complexes in $D^{b}(\coh\X)$ by quivers with relations, together with a realization by tilting complexes with line bundles and simples sheaves (up to suspension shift).

\begin{List}\label{list}{{\bf Endomorphism algebras of tilting complexes in $D^{b}(\coh\X)$}}
\end{List}

{\tiny{\begin{center}
$\begin{array}{ccccclccc} \\
\bf{algebra} && \bf{quiver} & \bf{relations}& \bf{a \ realization\ by\ complex} \\[-4em]
 \begin{array}{l}\\[8em]
  B_{11}
 \end{array}
 &&
  {\xymatrix@-1pc{&&\circ \ar[rrdd]^{b_{1}} \\
  &&\circ \ar[rrd]_{b_{2}} \\
  \circ \ar[rrd]^{a_{3}} \ar[rrdd]_{a_{4}}  \ar[rru]_{a_{2}} \ar[rruu]^{a_{1}} & &&&\circ\ar@2{.}[llll]_{}\\
  &&\circ \ar[rru]^{b_{3}} \\
  &&\circ \ar[rruu]_{b_{4}}}}
 &
  {\begin{array}{l}\\[6em]
b_{3}a_{3}=b_{2}a_{2}-b_{1}a_{1}\\[0.5em]
b_{4}a_{4}=b_{2}a_{2}-\lambda b_{1}a_{1}
\end{array} }
&
{\xymatrix@-1pc{&&\co(\vx_1) \ar[rrdd]^{X_{1}} \\
  &&\co(\vx_2) \ar[rrd]_{X_{2}} \\
  \co \ar[rrd]^{X_{3}} \ar[rrdd]_{X_{4}}  \ar[rru]_{X_{2}} \ar[rruu]^{X_{1}} & &&&\co(\vc)\ar@2{.}[llll]_{}\\
  &&\co(\vx_3) \ar[rru]^{X_{3}} \\
  &&\co(\vx_4) \ar[rruu]_{X_{4}}}}\\[-7em]

\begin{array}{l}\\[6em]
  B_{12}
 \end{array}
 &&
  {\xymatrix@-1pc{\circ \ar[rrdd]^{a_{1}} \\
  \circ \ar[rrd]_{a_{2}} \\
  &&\circ \ar@{=>}[rr]^{v}_{u} & &\circ\\
  \circ\ar[rru]^{a_{3}} \\
  \circ \ar[rruu]_{a_{4}} }}
&
  {\begin{array}{l}\\[8em]
  va_{1}=0\\[0.5em]
   ua_{2}=0\\[0.5em]
    (u-v)a_{3}=0\\[0.5em]
(u-\lambda v)a_{4}=0
\end{array} }
&
  {\xymatrix@-1pc{S_{11}[-1]  \ar[rrdd]^{\eps_{1}} \\
  S_{21}[-1]  \ar[rrd]_{\eps_{2}} \\
  &&\co \ar@{=>}[rr]^{X^2_{1}}_{X^2_{2}} & &\co(\vc)\\
  S_{31}[-1]  \ar[rru]^{\eps_{3}} \\
  S_{41}[-1]  \ar[rruu]_{\eps_{4}} }}\\[-3em]

\begin{array}{l}\\[2em]
  A_{13}
 \end{array}
 &&
  {\xymatrix@-1pc{\circ \ar[rrd]^{a_{1}} \\
  \circ \ar[rr]^{a_{2}}
  &&\circ \ar@{=>}[rr]^{v}_{u} & &\circ\ar[rr]^{b}&& \circ\\
  \circ \ar[rru]_{a_{3}} }}
&
  {\begin{array}{ll}
 \\[2em]
bv=0\\[0.3em]
ua_{1}=0\\[0.3em]
(u-v)a_{2}=0\\[0.3em]
(u-\lambda v)a_{3}=0
\end{array} }
&
  {\xymatrix@-1pc{S_{21}[-1] \ar[rrd]^{\eps_{2}} \\
  S_{31}[-1] \ar[rr]^{\eps_{3}}
  &&\co \ar@{=>}[rr]^{X_1^2}_{X^2_{2}} & &\co(\vc)\ar[rr]^{\pi_{1}}&& S_{10}\\
  S_{41}[-1] \ar[rru]_{\eps_{4}} }}
  \\[-3em]

\begin{array}{l}\\[2em]
  A_{14}
 \end{array}\hspace{-2mm}
 &&\hspace{-2mm}
  {\xymatrix@-1pc{ \circ \ar[rrd]^{a_{1}}&&&&&&\circ \\
  &&\circ \ar@{=>}[rr]^{v}_{u} & &\circ\ar[rru]^{b_{1}}\ar[rrd]_{b_{2}}\\
  \circ \ar[rru]_{a_{2}}&&&&&&\circ}}
&
  {\begin{array}{ll}
 \\ [4em]
va_{1}=0 \\[0.5em]
ua_{2}=0 \\[0.5em]
b_1(u-v)=0\\[0.5em]
b_2(u-\lambda v)=0
\end{array} }
&
  {\xymatrix@-1pc{ S_{11}[-1]  \ar[rrd]^{\eps_{1}}&&&&&&S_{30}  \\
  &&\co \ar@{=>}[rr]^{X^2_{1}}_{X^2_{2}} & &\co(\vc)\ar[rru]^{\pi_{3}}\ar[rrd]^{\pi_{4}}\\
  S_{21}[-1]  \ar[rru]^{\eps_{2}}&&&&&&S_{40}  }}
\\[-2em]

 \begin{array}{l}\\[4em]
  A_{15}
 \end{array}
 &&
  {\xymatrix@-1pc{
  &&&&&&\circ  \\
\circ \ar[rr]^{a}&& \circ\ar@{=>}[rr]^{v}_{u} && \circ \ar[rru]^{b_{1}} \ar[rr]^{b_{2}} \ar[rrd]_{b_{3}}&&\circ\\
    &&&&&&\circ
    }}
&
  {\begin{array}{l}
  \\ [4em]
  va=0 \\ [0.5em]
b_{1}u=0 \\ [0.5em]
b_{2}(u-v)=0 \\ [0.5em]
b_{3}(u-\lambda v)=0
\end{array} }
&
  {\xymatrix@-1pc{
  &&&&&&S_{10} \\
S_{41}[-1] \ar[rr]^{\eps_{4}}&& \co\ar@{=>}[rr]^{X^2_{1}}_{X^2_{2}} && \co(\vc) \ar[rru]^{\pi_{1}} \ar[rr]^{\pi_{2}} \ar[rrd]_{\pi_{3}}&&S_{20}\\
    &&&&&&S_{30}
    }}\\[-3em]

\begin{array}{l}\\[6em]
  B_{13}
 \end{array}
 &&
  {\xymatrix@-1pc{
  &&&&\circ \\
  &&&&\circ  \\
  \circ \ar@{=>}[rr]^{v}_{u} & &\circ \ar[rruu]^{b_{1}} \ar[rru]_{b_{2}} \ar[rrd]^{b_{3}} \ar[rrdd]_{b_{4}}\\
    &&&&\circ\\
    &&&&\circ
    }}
&
  {\begin{array}{ll}\\ [6em]
  b_{1}v=0\\ [0.5em]
  b_{2}u=0\\ [0.5em]
  b_{3}(u-v)=0\\ [0.5em]
  b_{4}(u-\lambda v)=0
\end{array} }
&
  {\xymatrix@-1pc{
  &&&&S_{10}  \\
  &&&&S_{20}   \\
  \co \ar@{=>}[rr]^{X^2_{1}}_{X^2_{2}} & &\co(\vc) \ar[rruu]^{\pi_{1}} \ar[rru]_{\pi_{2}} \ar[rrd]^{\pi_{3}} \ar[rrdd]_{\pi_{4}}\\
    &&&&S_{30}   \\
    &&&&S_{40}
    }}\\[-2em]

\end{array}$

$\begin{array}{cccccl}\\

\begin{array}{l}\\[3.5em]
  B_{21}
 \end{array}
&&
  {\xymatrix@-1pc{
  &&\circ\ar[rrd]^{b_{1}}&& \\
  \circ \ar[rru]^{a_{1}}\ar[rr]^{a_{2}} \ar[rrd]_{a_{3}}&&\circ\ar[rr]^{b_{2}}&&\circ\ar[rr]^{u} &&\circ\\
    &&\circ\ar[rru]_{b_{3}}
    }}
&
  {\begin{array}{l}\\ [3.5em]
b_{3}a_{3}=b_{2}a_{2}-b_{1}a_{1}\\ [0.5em] u(b_{2}a_{2}-\lambda
b_{1}a_{1})=0
\end{array} }
&
  {\xymatrix@-1pc{
  &&\co(\vx_1)\ar[rrd]^{X_{1}}&& \\
  \co \ar[rru]^{X_{1}}\ar[rr]^{X_{2}} \ar[rrd]_{X_{3}}&&\co(\vx_2)\ar[rr]^{X_{2}}&&\co(\vc) \ar[rr]^{\pi_{4}} &&S_{40}\\
    &&\co(\vx_3)\ar[rru]_{X_{3}}
    }}\\[-2em]

\begin{array}{l}\\[3.5em]
  B_{22}
 \end{array}
 &&
  {\xymatrix@-1pc{
  \circ\ar[rrd]^{a_{1}}&& &&\circ\ar[rrd]^{u} \\
  \circ \ar[rr]^{a_{2}}&&\circ\ar[rrrr]_{w} \ar[rru]^{v}&&&&\circ \\
    \circ\ar[rru]_{a_{3}}
    }}
&
  {\begin{array}{ll}\\ [3.5em]
wa_{1}=0\\ [0.5em]
(w-uv)a_{2}=0\\ [0.5em]
(w-\lambda uv)a_{3}=0
\end{array} }
&
  {\xymatrix@-1pc{
  S_{21}[-1]\ar[rrd]^{\eps_{2}}&& &&\co(\vx_1)\ar[rrd]^{X_{1}} \\
  S_{31}[-1] \ar[rr]^{\eps_{3}}&&\co\ar[rrrr]_{X_2^2} \ar[rru]^{X_{1}}&&&&\co(\vc) \\
    S_{41}[-1]\ar[rru]_{\eps_{4}}
    }}\\[-2em]

\begin{array}{l}\\[2.5em]
  A_{23}
 \end{array}
 &&
  {\xymatrix@-1.5pc{
  \circ\ar[rrd]^{a_{1}}&& &&\circ\ar[rrd]^{u} \\
  &&\circ\ar[rrrr]_{w} \ar[rru]^{v}&&&&\circ \ar[rr]^{b}&&  \circ\\
    \circ\ar[rru]_{a_{2}}
    }}
&
  {\begin{array}{ll}
  \\ [2.5em]
bw=0\\ [0.5em]
(w-uv)a_{1}=0\\ [0.5em]
(w-\lambda uv)a_{2}=0
\end{array} }
&
  {\xymatrix@-1.5pc{
  S_{31}[-1]\ar[rrd]^{\eps_{3}}&& &&\co(\vx_1)\ar[rrd]^{X_{1}} \\
  &&\co\ar[rrrr]_{X^2_2} \ar[rru]^{X_{1}}&&&&\co(\vc)  \ar[rr]^{\pi_{2}}&&  S_{20}\\
    S_{41}[-1]\ar[rru]_{\eps_{4}}
    }}\\[-2em]

\begin{array}{l}\\[2.5em]
  A_{24}
 \end{array}
 &&
  {\xymatrix@-1.5pc{
  && &&\circ\ar[rrd]^{u} && &&\circ\\
  \circ\ar[rr]^{a}&&\circ \ar[rru]^{v}\ar[rrrr]_{w} &&&&\circ\ar[rru]^{b_{1}} \ar[rrd]_{b_{2}}\\
   && &&&& &&\circ
    }}
&
  {\begin{array}{l}
  \\ [2.5em]
 wa=0 \\ [0.5em]
b_1(w-uv)=0\\ [0.5em]
b_2(w-\lambda uv)=0
\end{array} }
&
  {\xymatrix@-1.5pc{
  && &&\co(\vx_1)\ar[rrd]^{X_1} && && S_{30}\\
  S_{21}[-1]\ar[rr]^{\eps_2}&&\co \ar[rru]^{X_1}\ar[rrrr]_{X_2^2} &&&&\co(\vc)\ar[rru]^{\pi_{3}} \ar[rrd]_{\pi_{4}}\\
   && &&&& &&S_{40}
    }}\\[-2em]

\begin{array}{l}\\[3.5em]
  B_{23}
 \end{array}
 &&
   {\xymatrix@-1pc{
  &&\circ\ar[rrd]^{u}&&&& \circ \\
  \circ\ar[rrrr]_{w} \ar[rru]^{v}&&&&\circ\ar[rru]^{b_{1}} \ar[rrd]_{b_{3}}\ar[rr]^{b_{2}}&&\circ\\
 &&&&&& \circ
    }}
&
  {\begin{array}{ll}
  \\ [3.5em]
b_1w=0\\ [0.5em]
b_2(w- uv)=0\\ [0.5em]
b_3(w-\lambda uv)=0
\end{array} }
&
{\xymatrix@-1pc{
  &&\co(\vx_1)\ar[rrd]^{X_{1}}&&&& S_{20} \\
  \co\ar[rrrr]_{X_2^2} \ar[rru]^{X_{1}}&&&&\co(\vc)\ar[rru]^{\pi_{2}} \ar[rrd]_{\pi_{4}}\ar[rr]^{\pi_{3}}&&S_{30}\\
 &&&&&& S_{40}
    }} \\

\begin{array}{l}\\[3.5em]
  B_{24}
 \end{array}
 &&
  {\xymatrix@-1pc{
   &&&&\circ\ar[rrd]^{b_{1}}\\
\circ\ar[rr]^{v}&& \circ \ar[rru]^{a_{1}}\ar[rr]^{a_{2}} \ar[rrd]_{a_{3}}&&\circ\ar[rr]^{b_{2}}&&\circ\\
    &&&&\circ\ar[rru]_{b_{3}}
    }}
&
  {\begin{array}{ll}
  \\ [3.5em]
b_{3}a_{3}=b_{2}a_{2}-b_{1}a_{1}\\ [0.5em]
(b_{2}a_{2}-\lambda b_{1}a_{1})v=0
\end{array} }
&
  {\xymatrix@-1pc{
   &&&&\co(\vx_1)\ar[rrd]^{X_{1}}\\
 S_{41}[-1]\ar[rr]^{\eps_{4}}&& \co \ar[rru]^{X_{1}}\ar[rr]^{X_{2}} \ar[rrd]_{X_{3}}&&\co(\vx_2)\ar[rr]^{X_{2}}&&\co(\vc)\\
    &&&&\co(\vx_3)\ar[rru]_{X_{3}}
    }}\\[-1em]

  \begin{array}{l}\\[3.5em]
  B_{31}
 \end{array}
 &&
  {\xymatrix@-1pc{
   &&\circ\ar[rrd]^{b_{1}}&&&&\circ\\
 \circ\ar[rru]^{a_{1}}\ar[rrd]_{a_{2}}&&&& \circ\ar[rru]^{u_{1}}\ar[rrd]_{u_{2}} \\
    &&\circ\ar[rru]_{b_{2}}&&&&\circ
    }}
&
  {\begin{array}{ll}
  \\ [3.5em]
u_{1}(b_{2}a_{2}-b_{1}a_{1})=0\\ [0.5em]
u_{2}(b_{2}a_{2}-\lambda b_{1}a_{1})=0
\end{array} }
&
  {\xymatrix@-1pc{
   &&\co(\vx_1)\ar[rrd]^{X_{1}}&&&&S_{30}\\
\co\ar[rru]^{X_{1}}\ar[rrd]_{X_{2}}&&&& \co(\vc)\ar[rru]^{\pi_{3}}\ar[rrd]_{\pi_4} \\
    &&\co(\vx_2)\ar[rru]_{X_{2}}&&&&S_{40}
    }}\\[-1em]

\begin{array}{l}\\[3.5em]
  B_{32}
 \end{array}
 &&
  {\xymatrix@-1pc{
    \circ\ar[rrd]^{v_{1}}&&&&\circ\ar[rrd]^{b_{1}}\\
 && \circ\ar[rru]^{a_{1}}\ar[rrd]_{a_{2}} &&&&\circ\\
   \circ\ar[rru]_{v_{2}}&&&&\circ\ar[rru]_{b_{2}}
    }}
&
  {\begin{array}{ll}
  \\ [3.5em]
(b_{2}a_{2}- b_{1}a_{1}))v_{1}=0\\ [0.5em]
(b_{2}a_{2}-\lambda b_{1}a_{1})v_2=0
\end{array} }
&
  {\xymatrix@-1pc{
    S_{31}[-1]\ar[rrd]^{\eps_{3}}&&&&\co(\vx_1)\ar[rrd]^{X_{1}}\\
 && \co\ar[rru]^{X_{1}}\ar[rrd]_{X_{2}} &&&&\co(\vc)\\
   S_{41}[-1]\ar[rru]_{\eps_{4}}&&&&\co(\vx_2)\ar[rru]_{X_{2}}
    }}\\[-1em]

\end{array}$

$\begin{array}{cccccl}

 \begin{array}{l}\\[2.5em]
  A_{33}
 \end{array}
 &&
  {\xymatrix@-1.5pc{
    &&&&\circ\ar[rrd]^{b_{1}}\\
 \circ\ar[rr]^{v}&& \circ\ar[rru]^{a_{1}}\ar[rrd]_{a_{2}} &&&&\circ\ar[rr]^{u}&&\circ\\
  &&&&\circ\ar[rru]_{b_{2}}
    }}
&
  {\begin{array}{ll}
  \\ [2.5em]
 (b_{2}a_{2}-b_{1}a_{1})v=0\\ [0.5em]
u(b_{2}a_{2}-\lambda b_{1}a_{1})=0
\end{array} }
&
{\xymatrix@-1.5pc{
    &&&&\co(\vx_1)\ar[rrd]^{X_{1}}\\
 S_{31}[-1]\ar[rr]^{\eps_{3}}&& \co\ar[rru]^{X_{1}}\ar[rrd]_{X_{2}} &&&&\co(\vc)\ar[rr]^{\pi_4}&&S_{40}\\
  &&&&\co(\vx_2)\ar[rru]_{X_{2}}
    }}\\[-2em]

\begin{array}{l}\\[3.5em]
  B_{41}
 \end{array}
 &&
  {\xymatrix@-1pc{
     \circ\ar[rr]^{a_{1}}\ar[rrdd]_<<<{a_{2}}&&\circ\ar[rr]^{b_{1}}\ar[rrdd]^>>>{b_{3}}&&\circ\\
  \\
\circ\ar[rr]_{a_{4}}\ar[rruu]^<<<{a_{3}}&&\circ\ar[rr]_{b_{4}}\ar[rruu]_>>>{b_{2}}&&\circ
    }}
&
  {\begin{array}{ll}
  \\ [3.5em]
b_{2}a_{2}-b_{1}a_{1}=0\\ [0.5em]
b_{4}a_{4}-b_{3}a_{3}=0\\ [0.5em]
b_1a_3-b_{2}a_{4}=0\\ [0.5em]
b_{4}a_{2}-\lambda b_{3}a_{1}=0
\end{array} }
&
{\xymatrix@-1pc{
    \co\ar[rr]^{X_{1}}\ar[rrdd]_<<<{X_{2}}&&\co(\vx_1)\ar[rr]^{\pi_3X_{1}}\ar[rrdd]^>>>{\pi_{4}X_1}&&S_{30}\\
  \\
\co(\vx_1-\vx_2)\ar[rr]_{X_{1}}\ar[rruu]^<<<{X_{2}}&&\co(\vx_2)\ar[rr]_{\pi_4X_{2}}\ar[rruu]_>>>{\pi_{3}X_2}&&S_{40}
    }}\\[-1em]

\begin{array}{l}\\[3.5em]
  A_{42}
 \end{array}
 &&
  {\xymatrix@-1pc{
    &&\circ\ar[rr]^{w}\ar[rrdd]^>>>>>{u}&&\circ\ar[rrd]^{b_{1}}\\
  \circ\ar[rru]^{a_{1}}\ar[rrd]_{a_{2}}&&&&&&\circ\\
&&\circ\ar[rr]_{y}\ar[rruu]_>>>>>{v}&&\circ\ar[rru]_{b_{2}}
    }}
&
  {\begin{array}{lll}
  \\ [3.5em]
b_2y-b_1v=0\\[0.5em]
ya_2-ua_1=0\\[0.5em]
va_2-wa_1=0\\[0.5em]
b_2u-\lambda b_1w=0\\[0.5em]
b_2ya_2=0
\end{array} }
&
  {\xymatrix@-1pc{
    &&\co\ar[rr]^{X_{1}}\ar[rrdd]^>>>>>{X_{2}}&&\co(\vx_1)\ar[rrd]^{\pi_4X_{1}}\\
  S_{31}[-1]\ar[rru]^{\eps_{3}}\ar[rrd]_{X_{2}\eps'_3}&&&&&&S_{40}\\
&&\co(\vx_1-\vx_2)\ar[rr]_{X_1}\ar[rruu]_>>>>>{X_2}&&\co(\vx_2)\ar[rru]_{\pi_4X_{2}}
    }}
\end{array}$
\end{center}}}

In the above list, each algebra $A_{ij}$ or $B_{ij}$ on the first column is given by quiver with relations in the second and third columns respectively. The last column is a realization of each algebra by a tilting object in the bounded derived category $D^b(\coh\X)$ with line bundles and simple sheaves (up to suspension shift). The algebras $A_{ij}$'s are realized by tilting complexes while $B_{ij}$'s are realized by tilting sheaves.

Now we first explain the representatives of arrows ($X_i$, $\pi_i$, $\eps_i$ for $1\leq i\leq 4$ and $\eps'_3$) in the above realizations.

Recall that there is a natural projection from the category of $\mathbb{L}$-graded
$S$-modules to the category of coherent sheaves: $\mod^{\mathbb{L}}(S)\to\coh\X$, where
\[S=k[X_{1}, X_2, X_3, X_4]/(X_{3}^{2}-X_{2}^{2}+X_1^2,\ X_{4}^{2}-X_{2}^{2}+\lambda X_1^2).\]
Under the projection, we use $X_i$'s to denote the obvious multiplication $L\to L(\vx_i)$ for any line bundle $L$ in $\coh\X$. It follows that $X_{3}^{2}=X_{2}^{2}-X_1^2$ and $X_{4}^{2}=X_{2}^{2}-\lambda X_1^2$.

Note that $\Hom(\co(\vc), S_{i0})\cong k$ for $1\leq i\leq 4$. We fix a nonzero element $\pi_i\in\Hom(\co(\vc), S_{i0})$, which fits to the following exact sequence in $\coh\X$: $$\xymatrix@C=0.5cm{
 0\ar[r] & \co(\vx_i) \ar^-{X_i}[r] & \co(\vc) \ar^-{\pi_i}[r]& S_{i0} \ar[r]& 0.}$$
Meanwhile, by Serre duality we have $\Hom(S_{i1}[-1],\co)\cong D\Hom(\co, S_{i0})$, which then has dimension one.
Similarly, we fix a nonzero element $\eps_i\in\Hom(S_{i1}[-1],\co)$, which fits into the following triangle
$$\xymatrix@C=0.5cm{
  S_{i1}[-1]\ar^-{\eps_i}[r] & \co \ar^-{X_i}[r] & \co(\vx_i) \ar[r]& S_{i1}.}$$ It follows that the compositions $\pi_i X_i=0$ and $X_i\eps_i=0$ for any $i$.

Moreover, by using the functor $\Hom(S_{31}[-1],-)$ to the exact sequence
 $$\xymatrix@C=0.5cm{
  0\ar[r] & \co(-\vx_1) \ar^-{X_1}[r] & \co \ar[r]& S_{10} \ar[r]& 0,}$$ the multiplication $X_1: \co(-\vx_1)\to\co$ yields an isomorphism $$\Hom(S_{31}[-1],\co(-\vx_1))\cong\Hom(S_{31}[-1],\co).$$ Hence, there exists an element $\eps'_3\in\Hom(S_{31}[-1],\co(-\vx_1))$ satisfying  $\eps_3=X_1\eps'_3$ and then $X_3\eps'_3=0$.

In each case one proves that the listed complexes are tilting in $D^b(\coh\X)$ having the corresponding algebras as endomorphism algebras. As a typical example, we consider the realization for the algebra $A_{42}$. Take the tilting complex $$T=S_{31}[-1]\oplus\co\oplus\co(\vx_1-\vx_2)\oplus\co(\vx_1)\oplus\co(\vx_2)\oplus S_{41}.$$  Our aim is to show that the endomorphism algebra $\Sigma:=\End(T)$ is isomorphic to $A_{42}$ as described by quiver with relations in List \ref{list}.

It is easy to see that the quiver of $\Sigma$ has the shape as that of $A_{42}$. We now show that the following assignments (comparing the presentations for $A_{42}$ and $\Sigma$ in List \ref{list})
 $$ a_1\mapsto \eps_3;\  a_2\mapsto X_2\eps'_3;\ w\mapsto X_1;\ y\mapsto X_1; v\mapsto X_2;\ u\mapsto X_2;\ b_1\mapsto \pi_4X_1;\ b_2\mapsto \pi_4X_2;$$
 yield an isomorphism of algebras $\sigma: A_{42}=kQ/I\cong\Sigma$,
 where $I$ coincides with the ideal generated by the relations of $A_{42}$ as described in List \ref{list}. In fact, by $\Hom(\co(\vx_1-\vx_2), \co(\vc))\cong k$ we get the following commutative diagram, i.e. $X_1X_2=X_2X_1$:
$$\xymatrix@-1.5pc{
    &&&&\co(\vx_1)\ar[rrd]^{X_{1}}\\
 && \co(\vx_1-\vx_2)\ar[rru]^{X_{2}}\ar[rrd]_{X_{1}} &&&&\co(\vc).\\
  &&&&\co(\vx_2)\ar[rru]_{X_{2}}
    }$$
    It follows that $(\pi_4X_1)X_2=(\pi_4X_2)X_1$, hence $b_2y-b_1v\in I$. Similarly, by the definition of $\eps'_3$ we have $\eps_3=X_1\eps'_3$, it follows that $X_1(X_2\eps'_3)=X_2(X_1\eps'_3)=X_2\eps_3$, hence $ya_2-ua_1\in I$. Moreover, combining $X_{3}^{2}=X_{2}^{2}-X_1^2$ with $X_3\eps'_3=0$, we get $X_2^2\eps'_3- X_1\eps_3=(X_2^2-X_1^2)\eps'_3=0$, hence $va_2-wa_1\in I$; and combining $X_{4}^{2}=X_{2}^{2}-\lambda X_1^2$ with $\pi_4X_4=0$, we get $\pi_4X_{2}^{2}-\lambda\pi_4X_{1}^{2}=0$, hence $b_2u-\lambda b_1w\in I$. Finally, $\Hom(S_{31}[-1], S_{41})=0$ implies $(\pi_4X_2)X_1(X_2\eps'_3)=0$, which yields $b_2ya_2\in I$. Then $\sigma$ is an isomorphism follows immediately.

 We emphasize again that the list of endomorphism algebras in List \ref{list} has already been given by Meltzer \cite[List 10.4]{[M]}, but with a totally different approach.

\end{appendix}


\begin{thebibliography}{99}

\bibitem{[ABS]} Assem I, Br\"{u}stle T,  Schiffler R.
Cluster-tilted algebras as trivial extensions. Bull London
Math Soc, 2008, 40 (1): 151--162

\bibitem{[BKL]} Barot M, Kussin D, Lenzing H. The cluster category of a canonical algebra. Trans Amer Math Soc, 2010, 362: 4313--4330

\bibitem{[BIRS]} Buan A B, Iyama O, Reiten I, Scott J. Cluster structures for 2-Calabi-Yau categories and unipotent groups. Compos Math 2009, 145 (4): 1035--1079

\bibitem{[BMRRT]} Buan A B, Marsh R J, Reineke M, Reiten I, Todorov G. Tilting theory and cluster combinatorics. Adv Math, 2006, 204: 572--618

\bibitem{[CLR]} Chen J, Lin Y, Ruan S. Tilting objects in the stable category of vector bundles on a weighted projective line of type $(2,2,2,2;\lambda)$. J Algebra, 2014, 397: 570--588

\bibitem{[FZ]} Fomin S, Zelevinsky A. Cluster algebras I.
Foundations. J Amer Math Soc, 2002, 15 (2): 497--529

\bibitem{[GL]}  Geigle W, Lenzing H. A class of weighted
projective curves arising in representation theory of finite
dimensional algebras. Singularities, representations of algebras,
and Vector bundles. Springer Lect Notes Math, 1987, 1273: 265--297

\bibitem{[H]}  Happel D. Triangulated categories in the representation theory of finite-dimensional algebras. London Mathematical Society Lecture Note Series, Cambridge University Press, Cambridge, 1988, 119

\bibitem{[Ha]} Happel D. A characterization of hereditary categories with tilting object. Invent Math, 2001, 144: 381--398

\bibitem{[HIMO]}  Herschend M, Iyama O,  Minamoto H, Oppermann S. Representation theory of Geigle-Lenzing complete intersections. ArXiv:1409.0668, 2014

\bibitem{[IY]} Iyama O, Yoshino Y. Mutation in triangulated categories and rigid Cohen-Macaulay modules. Invent Math, 2008, 172: 117--168

\bibitem{[KR]} Keller B, Reiten I. Acyclic Calabi-Yau categories.
Compos Math, 2008, 144(5): 1332--1348

\bibitem{[KLM2]}  Kussin D, Lenzing H, Meltzer H. Triangle
singularities, ADE-chains and weighted projective lines.  Adv
in Math, 2013, 237: 194--251


\bibitem{[LM]} Lenzing H, Meltzer H. Sheaves on a weighted
projective line of genus one, and representations of a tubular
algebra. In Representations of algebras, Sixth International
Conference, Ottawa 1992. CMS Conf Proc 1993, 14:
313--337

\bibitem{[M]} Meltzer H.  Exceptional Vector Bundles, Tilting
Sheaves and Tilting Complexes for Weighted Projective Lines.
Memoirs of the Amer Math Soc, 2004, 171 (808)

\bibitem{[Ri]}Ringel C M. Tame algebras and integral quadratic forms. Springer, Berlin--Heidelberg--New York, 1984. Lecture Notes in Mathematics 1099

\bibitem{[Z]} Zhu B.  Equivalences between cluster categories. J Algebra, 2006, 304: 832--850

\end{thebibliography}
\end{document}